\newcommand{\NN}{\mathbb{N}}
\newcommand{\CC}{\mathbb{C}}
\let\SS\relax
\newcommand{\SS}{\mathbb{S}}
\newcommand{\ZZ}{\mathbb{Z}}
\newcommand{\Acal}{\mathcal{A}}
\newcommand{\Ccal}{\mathcal{C}}
\newcommand{\Dcal}{\mathcal{D}}
\newcommand{\Ecal}{\mathcal{E}}
\newcommand{\Fcal}{\mathcal{F}}
\newcommand{\Scal}{\mathcal{S}}
\DeclareMathOperator{\Hom}{Hom}
\newcommand{\spec}{\mathrm{Spec}}
\newcommand{\Sets}{\mathbf{Sets}}
\newcommand{\Sh}{\mathbf{Sh}}
\newcommand{\PSh}{\mathbf{PSh}}
\newtheorem{definition}{Definition}[section]
\newtheorem{proposition}[definition]{Proposition}
\newtheorem{theorem}[definition]{Theorem}
\newtheorem{corollary}[definition]{Corollary}
\newtheorem{lemma}[definition]{Lemma}
\newtheorem{example}[definition]{Example}
\newtheorem{remark}[definition]{Remark}
\tikzset{
b/.style={bend left=10},
bb/.style={bend left},
cl/.style={outer sep=-1pt},
}
\let\phi\varphi
\let\theta\vartheta
\newcommand\rurl[1]{%
  \href{http://#1}{\nolinkurl{#1}}%
}
\newcommand\rsurl[1]{%
  \href{https://#1}{\nolinkurl{#1}}%
}
   \def\MR#1{}
\title{Toposes over which essential implies locally connected}
\author{Jens Hemelaer}
\address{Department of Mathematics, University of Antwerp \\ 
 Middelheimlaan 1, B-2020 Antwerp (Belgium) \\ {\tt jens.hemelaer@uantwerpen.be}}
\DeclareMathOperator{\bc}{\triangleright}
\DeclareMathOperator{\bcp}{\underline{\triangleright}}
\begin{document}

\begin{abstract}
We introduce the notion of an EILC topos: a topos $\Ecal$ such that every essential geometric morphism with codomain $\Ecal$ is locally connected. We then show that the topos of sheaves on a topological space $X$ is EILC if $X$ is Hausdorff (or more generally, if $X$ is Jacobson). Further examples of Grothendieck toposes that are EILC are Boolean \'etendues and classifying toposes of compact groups. Next, we introduce the weaker notion of CILC topos: a topos $\Ecal$ such that any geometric morphism $f : \Fcal \to \Ecal$ is locally connected, as soon as $f^*$ is cartesian closed. We give some examples of topological spaces $X$ and small categories $\Ccal$ such that $\Sh(X)$ resp.\ $\PSh(\Ccal)$ are CILC. Finally, we show that any Boolean elementary topos is CILC.
\end{abstract}

\maketitle

\tableofcontents

\section{Introduction}

For elementary toposes $\Ecal$ and $\Fcal$, a geometric morphism $f : \Fcal \to \Ecal$ is called \textbf{essential} if the inverse image functor $f^*$ has a left adjoint, that is then usually written as $f_!$. Moreover, we say that $f$ is \textbf{locally connected} (or molecular) if  $f^*$ has an $\Ecal$-indexed left adjoint, or equivalently, if $f$ is essential and the natural morphism
\begin{equation} \label{eq:frobenius}
f_!(X \times_{f^*B} f^*A) \to f_!(X) \times_B A
\end{equation}
is an isomorphism, for all morphisms $A \to B$ in $\Ecal$ and $X \to f^*B$ in $\Fcal$, see \cite{barr-pare}. The notion of a locally connected geometric morphism is more natural from a geometric point of view; in particular, locally connected geometric morphisms are stable under base change, while essential geometric morphisms are not.

In this article, we will follow an idea of Mat\'ias Menni, formulated in his message ``Essential vs Molecular'' on the category theory mailing list (May 3, 2017), where he mentioned the problem of characterizing the toposes $\Ecal$ such that any essential geometric morphism $f : \Fcal \to \Ecal$ is locally connected. Elementary toposes with this property will here be called EILC (``Essential Implies Locally Connected''). We will show that surprisingly many toposes are EILC, including for example the topos of sheaves $\Sh(X)$ for $X$ an arbitrary Hausdorff topological space.

The notion of an EILC topos has applications to the study of levels of an elementary topos $\Ecal$. A \textbf{level} of $\Ecal$, as introduced by Lawvere, is by definition a subtopos $\Ecal'$ of $\Ecal$ such that the inclusion geometric morphism $i : \Ecal' \to \Ecal$ is essential. For example, each open subtopos of $\Ecal$ defines a level of $\Ecal$. Conversely, if $\Ecal$ is an EILC topos, then for any level $\Ecal'$, the inclusion $i : \Ecal' \to \Ecal$ must be locally connected, and because locally connected geometric morphisms are open, we find that any level of $\Ecal$ is given by an open subtopos. So for EILC toposes, the structure of the levels is completely known.

Another situation where EILC toposes are relevant, and the original motivation for this paper, is in the study of precohesive geometric morphisms. In \cite{lawvere-axiomatic}, Lawvere introduced an axiomatic setting for when a category $\Ecal$ can be seen as a ``category of spaces'' over a base category $\Scal$, with both $\Ecal$ and $\Scal$ cartesian closed and extensive. A first requirement is that there is a string of adjoint functors
\begin{equation*}
\begin{tikzcd}[column sep = huge]
\Ecal \ar[rr,bend left=40,"{f_!}"] \ar[rr,"{f_*}"] & & 
\Scal \ar[ll,bend right=20,"{f^*}"'] \ar[ll,bend left=20,"{f^!}"']
\end{tikzcd} \qquad \quad f_! \dashv f^* \dashv f_* \dashv f^!
\end{equation*} 
between $\Ecal$ and $\Scal$.
Here $f^*$ is thought of as the functor that sends an object in $\Scal$ to its associated discrete space object in $\Ecal$. Then for $X$ in $\Ecal$, $f_!(X)$ has an interpretation as the object of connected components (or ``pieces'') of $X$, and $f_*(X)$ can be thought of as the object of points of $X$. 

Further relevant axioms in this setting are that $f^*$ (or equivalently, $f^!$) is fully faithful, that $f_!$ preserves finite products, and that the natural map $f_* \to f_!$ is an epimorphism. Lawvere calls this last condition the Nullstellensatz: it expresses that each component has at least one point. If all the conditions above are satisfied, then $\Ecal$ is said to be \textbf{precohesive} over $\Scal$, see the work of Lawvere and Menni \cite[Definition 2.4]{lawvere-menni}. 

A particular case of interest is when the string of adjoint functors $f_! \dashv f^* \dashv f_* \dashv f^!$ arises from a geometric morphism $f : \Ecal \to \Scal$ between elementary toposes (with $f^*$ the inverse image functor). Because $f^*$ has a left adjoint $f_!$, the geometric morphism $f$ is necessarily essential. Moreover, recall that by definition a geometric morphism $f$ is \textbf{local} if and only if $f^*$ is fully faithful and $f_*$ has a further right adjoint $f^!$. Finally, the Nullstellensatz holds whenever $f$ is hyperconnected \cite[Lemma 3.1]{lawvere-menni}. As a result, $\Ecal$ is precohesive over $\Scal$ if and only if $f$ is hyperconnected, essential and local, with $f_!$ preserving finite products. If this is the case, then the geometric morphism $f$ is itself called precohesive.

A natural question is now whether precohesive geometric morphisms $f : \Ecal \to \Scal$ are stable under \'etale base change, or in other words whether for an object $X$ in $\Scal$, the induced geometric morphism on slice toposes
\begin{equation*}
f/X : \Ecal/f^*(X) \to \Scal/X
\end{equation*}
is again precohesive. It was shown in \cite[Corollary 10.4]{lawvere-menni} that this is the case whenever $f$ is locally connected. A question that was left open in \cite{lawvere-menni} is then whether already every precohesive geometric morphism is locally connected. An affirmative answer would be useful in practice: it is often difficult to verify explicitly whether the map \eqref{eq:frobenius} is an isomorphism. 

At the moment, there seem to be no heuristic arguments to support the idea that all precohesive geometric morphisms are locally connected. So to settle the problem, it is natural to focus on the construction of an example of a precohesive geometric morphism that fails to be locally connected. Recently, there have been renewed efforts to finding such an example. In \cite{not-locally-connected}, Morgan Rogers and the present author constructed an essential, hyperconnected, local geometric morphism that is not locally connected, answering a question by Thomas Streicher in the category theory mailing list (``Does essential entail locally connected for hyperconnected geometric morphisms?'', September 19, 2020). The constructed geometric morphism is however not precohesive, because $f_!$ does not preserve finite products. In another direction, Garner and Streicher in \cite{garner-streicher} constructed essential, local geometric morphisms $f$, with $f_!$ preserving finite products, such that $f$ fails to be locally connected. Again, $f$ is not precohesive (it is not hyperconnected). Note that in examples like this, the codomain can not be an EILC topos. So whenever we show that a certain type of toposes is EILC, these toposes can be added to the list of codomain toposes to avoid when constructing examples as above. 

The main goal of the present article is to show that the topos of sheaves $\Sh(X)$ on a topological space is EILC if $X$ is Jacobson. Here we say that a topological space is \textbf{Jacobson} if two open subsets are equal whenever they contain the same closed points, see e.g.\ \cite[\href{https://stacks.math.columbia.edu/tag/005T}{Section 005T}]{stacks-project}. For $T_1$ topological spaces (in particular, Hausdorff topological spaces) this condition is automatically satisfied, because in this case all points are closed. Further, the spectrum $\spec(R)$ of a commutative ring $R$ is Jacobson (for the Zariski topology) if and only if $R$ is a Jacobson ring. As a result, there are many examples of Jacobson spaces that are not Hausdorff, for example $\spec(\ZZ)$ or $\spec(\CC[x,y])$.

In order to give a more comprehensive list of EILC toposes, we also show in Section \ref{sec:boolean-etendues} that a Grothendieck topos is EILC if it is a Boolean \'etendue, or if it is a classifying topos of a compact topological group. In particular, the petit \'etale topos of a field is EILC, because it coincides with the classifying topos of the absolute Galois group of the field (which is compact). An intriguing problem that is left open is whether the petit \'etale topos of a Jacobson ring is EILC. 

In the last section, we introduce the more general class of CILC toposes (``Cartesian closed Implies Locally Connected''). These are the elementary toposes $\Ecal$ such that any geometric morphism $f : \Fcal \to \Ecal$ is locally connected, as soon as $f^*$ is cartesian closed (i.e.\ preserves exponential objects). We then introduce a notion of weakly Jacobson geometric morphism, and we show that if $f : \Ecal \to \Scal$ is weakly Jacobson and $\Scal$ is EILC, then $\Ecal$ is CILC (under the assumption that $\Ecal$ has a natural number object). Further, we give a characterization of topological spaces $X$ and small categories $\Ccal$ such that $\Sh(X)$ resp.\ $\PSh(\Ccal)$ are weakly Jacobson over the topos of sets. Finally, we show that all Boolean elementary toposes are CILC, extending an earlier result by Mat\'ias Menni, who showed that if $\Scal$ is a Boolean topos and $f : \Ecal \to \Scal$ is a connected essential geometric morphism with $f_!$ preserving products, then $f$ is locally connected.

Now let us return to the motivating problem: finding an example of a precohesive geometric morphism $f : \Ecal \to \Scal$ that fails to be locally connected. As we saw above, $\Scal$ can not be EILC in such an example. Because precohesive geometric morphisms have cartesian closed inverse image functor $f^*$, we can even say that $\Scal$ can not be CILC. From the examples in this article, we conclude that $\Scal$ can not be a Boolean topos (as already shown in a different way by Menni), or a topos of sheaves on a Jacobson space, the topos of sheaves on the Sierpinski space or the topos of presheaves on a commutative monoid, among others. Having a list of codomain toposes to avoid might help to eventually find an example, if it exists.

\section{Background on Beck--Chevalley conditions}

\begin{definition}
We write $g \bc^q_p f$ if there is a commutative diagram
\begin{equation} \label{eq:bc-diagram}
\begin{tikzcd}
\Fcal' \ar[r,"{q}"] \ar[d,"{g}"'] & \Fcal \ar[d,"{f}"] \\
\Ecal' \ar[r,"{p}"] & \Ecal 
\end{tikzcd}
\end{equation}
such that the natural map
\begin{equation*}
f^* p_* \to q_* g^*
\end{equation*}
is an isomorphism (the Beck--Chevalley condition). Further, we write $g \bc_p f$ if there exists a morphism $q$ with $g \bc_p^q f$, and $g \bcp_p f$ if moreover $q$ can be chosen such that \eqref{eq:bc-diagram} is a pullback diagram.
\end{definition}

In order for pullbacks of elementary toposes to exist, we need some technical conditions. Recall that a geometric morphism $f : \Fcal \to \Ecal$ is \textbf{bounded} if there is an object $B$ in $\Fcal$ such that for every object $X$ in $\Fcal$, there is an object $I$ in $\Ecal$ such that $X$ is a subquotient of $B \times p^*(I)$, see \cite[Definition B3.1.7]{johnstone-elephant}. The pullback of two geometric morphisms $f : \Fcal \to \Ecal$ and $p : \Ecal' \to \Ecal$ exists if either $f$ or $p$ is bounded \cite[Proposition B3.3.6]{johnstone-elephant}. All localic geometric morphisms are bounded \cite[Examples B3.1.8(a)]{johnstone-elephant}. In particular, inclusions and \'etale geometric morphisms are bounded.

When discussing Beck--Chevalley conditions in topos theory, the notion of a tidy geometric morphism is relevant:

\begin{definition}
Let $p : \Ecal' \to \Ecal$ be a geometric morphism. Then we say that $p$ is \textbf{tidy} if $p_*$ preserves filtered $\Ecal$-indexed colimits. 
\end{definition}

For an extensive treatment of tidy geometric morphisms, see e.g.\ Moerdijk and Vermeulen \cite[Chapter III]{moerdijk-vermeulen} or Johnstone \cite[C3.4]{johnstone-elephant}

We recall some of the history behind this concept, following the introduction of \cite{moerdijk-vermeulen}. The concept of a tidy geometric morphism was first studied by Edwards in her PhD thesis \cite{edwards-thesis}, in the special case where the codomain topos is $\Sets$. Later, the concept was introduced for an arbitrary codomain topos by Tierney, and developed by Lindgren in his PhD thesis \cite{lindgren-thesis}. Lindgren referred to these geometric morphisms as being ``proper''. Moerdijk and Vermeulen later used the name ``tidy'' instead, to distinguish the concept from a notion of properness as introduced by Johnstone.

In practice, it might be difficult to check whether a given geometric morphism is tidy. However, every closed inclusion is tidy \cite[Chapter III, Corollary 5.8]{moerdijk-vermeulen}, so this gives a large family of concrete examples.

We recall the following properties from the literature.

\begin{proposition}[{See \cite{johnstone-elephant}}] \label{prop:locally-connected-tidy} Consider a pullback diagram
\begin{equation*}
\begin{tikzcd}
\Fcal' \ar[r,"{q}"] \ar[d,"{g}"'] & \Fcal \ar[d,"{f}"] \\
\Ecal' \ar[r,"{p}"] & \Ecal
\end{tikzcd}\quad.
\end{equation*}
\begin{enumerate}
\item If $f$ is locally connected and $p$ is bounded, then $g \bcp_p f$. 
\item If $f$ is bounded and locally connected, then $g \bcp_p f$. 
\item If $p$ is bounded and tidy, and $\Ecal$ has a natural number object, then $g \bcp_p f$.
\item If $p$ is bounded and tidy and $f$ is bounded, then $g \bcp_p f$.
\end{enumerate}
\end{proposition}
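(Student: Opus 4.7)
The plan is to treat this proposition as a compilation of stability-under-pullback results, split along two axes: the hypothesis on $f$ or $p$ (locally connected versus tidy) and which of $f$, $p$ supplies the boundedness needed for the pullback to exist. In each of the four items the pullback of toposes exists by Proposition B3.3.6 of \cite{johnstone-elephant}, so the substantive content is the Beck--Chevalley isomorphism $f^* p_* \xrightarrow{\sim} q_* g^*$.

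For items (1) and (2) I would use stability of locally connected morphisms under pullback. The first step is to show that $g$ is itself locally connected, by constructing the left adjoint $g_!$ of $g^*$ from $f_!$ by base change along $p$. The second step is to verify the left-adjoint Beck--Chevalley isomorphism $p^* f_! \xrightarrow{\sim} g_! q^*$, which is an instance of the indexed Frobenius reciprocity \eqref{eq:frobenius} for $f_!$ applied along the geometric morphism $p$. Taking right adjoints then yields the claimed isomorphism $f^* p_* \xrightarrow{\sim} q_* g^*$. This is essentially Lemma C3.3.15 of \cite{johnstone-elephant}; items (1) and (2) differ only in which hypothesis is used to apply B3.3.6 to guarantee the pullback.

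For items (3) and (4) I would instead invoke the stability theorem for tidy geometric morphisms, due to Lindgren and Pitts, set out in Chapter III of \cite{moerdijk-vermeulen} and in C3.4 of \cite{johnstone-elephant}. This theorem writes $p_*$ as an $\Ecal$-indexed filtered colimit of finite limits, and shows that both $f^*$ and $q_* g^*$ preserve this description, yielding the desired isomorphism. Performing this internally in $\Ecal$ requires either a natural number object (item (3)), which allows the filtered diagram to be built $\Ecal$-internally, or an alternative site-theoretic handle on the pullback topos, which boundedness of $f$ supplies (item (4)).

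The main obstacle, were one to reprove rather than cite these facts, is the tidy case: the proof that $f^* p_* \to q_* g^*$ is an isomorphism for tidy $p$ is genuinely delicate, because one must produce an internal presentation of the filtered colimits appearing in the construction of $p_*$, and then show that the two alternative hypotheses (an NNO in $\Ecal$, or boundedness of $f$) can both be massaged to play this role. By contrast the locally connected case reduces cleanly to the existence of the indexed left adjoint together with Frobenius reciprocity, and is routine once the correct statements are identified.
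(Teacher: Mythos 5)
Your proposal is correct and matches the paper's treatment: the paper likewise establishes this proposition purely by citation, pointing to Theorem C3.3.15 of \cite{johnstone-elephant} for (1) and (2) and to Theorems C3.4.7 and C3.4.10 for (3) and (4) (with \cite[Chapter III, Theorem 4.8]{moerdijk-vermeulen} as an alternative source), exactly the references you identify. Your added sketch of why those theorems work (pullback stability of locally connected morphisms via the mate of the left Beck--Chevalley isomorphism, and the Lindgren-style tidy stability argument) is consistent with those sources, so there is nothing to correct.
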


A proof for $(1)$ and $(2)$ is given in \cite[Theorem C3.3.15]{johnstone-elephant}. Further, $(3)$ corresponds to \cite[Theorem C3.4.7]{johnstone-elephant} and $(4)$ corresponds to \cite[Theorem C3.4.10]{johnstone-elephant}. See also \cite[Chapter III, Theorem 4.8]{moerdijk-vermeulen}, where the boundedness assumption is implicit. If we restrict to Grothendieck toposes, then all geometric morphisms are automatically bounded, and moreover every Grothendieck topos has a natural object. In this setting, (3) and (4) coincide and are attributed to Lindgren \cite{lindgren-thesis}.

Beck--Chevalley squares can be pasted in the following way:

\begin{proposition}[Transitivity] \label{prop:transitivity}
If $h \bc_{p'}^{q'} g$ and $g \bc_p^q f$, then $h \bc_{pp'}^{qq'} f$.
\end{proposition}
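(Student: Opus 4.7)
The plan is to use the pasting/compatibility of Beck--Chevalley transformations with horizontal composition of squares. Consider the commutative diagram
\begin{equation*}
\begin{tikzcd}
\Fcal'' \ar[r,"{q'}"] \ar[d,"{h}"'] & \Fcal' \ar[r,"{q}"] \ar[d,"{g}"'] & \Fcal \ar[d,"{f}"] \\
\Ecal'' \ar[r,"{p'}"] & \Ecal' \ar[r,"{p}"] & \Ecal
\end{tikzcd}
\end{equation*}
whose outer rectangle is the square associated with $h \bc_{pp'}^{qq'} f$. The hypotheses give natural isomorphisms $\alpha : f^* p_* \xrightarrow{\sim} q_* g^*$ and $\beta : g^* p'_* \xrightarrow{\sim} q'_* h^*$.

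The first step is to note the obvious candidate for the Beck--Chevalley map of the outer rectangle: using $(pp')_* = p_* p'_*$ and $(qq')_* = q_* q'_*$, form the composite
\begin{equation*}
f^* (pp')_* \;=\; f^* p_* p'_* \xrightarrow{\alpha p'_*} q_* g^* p'_* \xrightarrow{q_* \beta} q_* q'_* h^* \;=\; (qq')_* h^* .
\end{equation*}
Both factors are isomorphisms by assumption, so the composite is an isomorphism.

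The second step is to verify that this composite is in fact the canonical Beck--Chevalley transformation associated with the outer rectangle. This is a standard pasting identity. Writing the canonical Beck--Chevalley transformation attached to a commutative square as the composite of whiskered unit and counit,
\begin{equation*}
f^* p_* \xrightarrow{\eta_{qq}} q_* q^* f^* p_* = q_* g^* p^* p_* \xrightarrow{\varepsilon_{pp}} q_* g^*,
\end{equation*}
and analogously for the other two squares, one checks by naturality and the triangle identities that pasting horizontally yields precisely the composite displayed above. This is essentially the ``pasting lemma for mate correspondences'', and the verification is a routine diagram chase with units and counits of the four adjunctions $f^* \dashv f_*$, $g^* \dashv g_*$, $p^* \dashv p_*$, $p'^* \dashv p'_*$ (and the identifications $(pp')^* \dashv (pp')_*$, $(qq')^* \dashv (qq')_*$).

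I do not foresee a real obstacle: both hypothesised squares commute, so the pasted rectangle commutes, and the only genuine content is that the Beck--Chevalley transformations compose correctly. Once the compatibility in step two is verified, step one immediately yields that the outer Beck--Chevalley map is an isomorphism, which is the definition of $h \bc_{pp'}^{qq'} f$.
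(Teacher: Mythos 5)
Your proof is correct and follows essentially the same route as the paper, which simply pastes the two squares and composes the isomorphisms $f^* p_* p'_* \simeq q_* g^* p'_* \simeq q_* q'_* h^*$. The only difference is that you explicitly flag and justify the point the paper leaves implicit, namely that this composite coincides with the canonical Beck--Chevalley transformation of the outer rectangle (the standard pasting compatibility of mates), which is a welcome extra degree of care.
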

\begin{proof}
Consider the commutative diagram
\begin{equation*}
\begin{tikzcd}
\Fcal'' \ar[r,"{q'}"] \ar[d,"{h}"'] & \Fcal' \ar[r,"{q}"] \ar[d,"{g}"] & \Fcal \ar[d,"{f}"] \\
\Ecal'' \ar[r,"{p'}"] & \Ecal' \ar[r,"{p}"] & \Ecal 
\end{tikzcd}
\end{equation*}
Then it follows that $f^* p_* p'_* \simeq q_* g^* p'_* \simeq q_* q'_* h^*$. 
\end{proof}

We now introduce the following definition:

\begin{definition}
Let $f : \Fcal \to \Ecal$ and $p : \Ecal' \to \Ecal$ be geometric morphisms. We say that $f$ is \textbf{locally connected at $p$} if there is a locally connected geometric morphism $g$ such that $g \bc_p f$.
\end{definition}

\begin{proposition}[Descent] \label{prop:descent}
Let $f : \Fcal \to \Ecal$ be an essential geometric morphism, and let $\{ p_i : \Ecal_i \to \Ecal \}_{i\in I}$ be a jointly surjective family of geometric morphisms. If $f$ is locally connected at $p_i$ for each $i \in I$, then $f$ is locally connected.
\end{proposition}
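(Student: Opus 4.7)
The plan is to check the defining Frobenius reciprocity isomorphism of local connectedness pointwise, using that the family $\{p_i^*\}_{i \in I}$ is jointly conservative (a consequence of the joint surjectivity of the $p_i$). Since $f$ is essential, let $f_!$ denote the left adjoint of $f^*$; I must show that the canonical map
\[
\phi_{X,A,B}\colon f_!(X \times_{f^*B} f^*A) \longrightarrow f_!(X) \times_B A
\]
is invertible for every $A \to B$ in $\Ecal$ and every $X \to f^*B$ in $\Fcal$.

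For each $i \in I$, fix a square witnessing $g_i \bc^{q_i}_{p_i} f$ with $g_i$ locally connected, so in particular $(g_i)_!$ exists. The Beck--Chevalley isomorphism $f^* (p_i)_* \simeq (q_i)_* g_i^*$ transfers under adjunction to an isomorphism of the corresponding left adjoints $p_i^* f_! \simeq (g_i)_! q_i^*$ (by uniqueness of left adjoints). Applying $p_i^*$ to $\phi_{X,A,B}$ and exploiting that $p_i^*$ and $q_i^*$ preserve finite limits, the source is identified with $(g_i)_!\bigl(q_i^* X \times_{g_i^* p_i^* B} g_i^* p_i^* A\bigr)$ and the target with $(g_i)_!(q_i^* X) \times_{p_i^* B} p_i^* A$. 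Granting that the transported map is the Frobenius map for $g_i$ at $q_i^* X \to g_i^* p_i^* B$ and $p_i^* A \to p_i^* B$, local connectedness of $g_i$ makes $p_i^* \phi_{X,A,B}$ invertible for every $i$, and joint conservativity of $\{p_i^*\}$ then delivers the invertibility of $\phi_{X,A,B}$ itself.

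The only real obstacle is the naturality/mate calculation identifying $p_i^* \phi_{X,A,B}$, transported across the isomorphism $p_i^* f_! \simeq (g_i)_! q_i^*$, with the Frobenius comparison map for $g_i$ rather than with some modified version of it. This is done by tracing the units and counits of the adjunctions $f_! \dashv f^*$, $(g_i)_! \dashv g_i^*$, $q_i^* \dashv (q_i)_*$ and $p_i^* \dashv (p_i)_*$ through the construction; it is essentially formal and contributes no new conceptual difficulty. The conceptual content of the proof lies entirely in the passage from the Beck--Chevalley isomorphism on right adjoints to the corresponding one on left adjoints.
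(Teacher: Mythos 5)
Your proposal is correct and follows essentially the same route as the paper's proof: reduce to checking the Frobenius comparison map after applying each $p_i^*$ (joint conservativity from joint surjectivity), pass from the Beck--Chevalley isomorphism $f^*p_{i,*} \simeq q_{i,*}g_i^*$ to its mate $g_{i,!}q_i^* \simeq p_i^*f_!$ on left adjoints, and conclude from local connectedness of $g_i$. The naturality check you flag as the "only real obstacle" is likewise left implicit in the paper, which simply writes the chain of isomorphisms.
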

\begin{proof}
It is enough to show that the map
\begin{equation*}
\theta : f_!(X \times_{f^*B} f^*A) \to f_!(X) \times_B A
\end{equation*}
is an isomorphism, for each $X$ in $\Fcal$ and each diagram $f_!(X) \to B \leftarrow A$ in $\Ecal$. Because the family $\{p_i\}_{i \in I}$ is jointly surjective, it is enough to prove that each $p_i^*(\theta)$ is an isomorphism.

Take $g$ and $q$ such that $fq = p_ig$, with $g$ locally connected, and such that the natural map $f^*p_{i,*} \to q_*g^*$ is an isomorphism. Because $f$ and $g$ are essential, there is also a natural isomorphism $g_!q^* \to p_i^*f_!$.
We compute:
\begin{align*}
p_i^*f_!(X \times_{f^*B} f^*A) &\simeq g_!(q^*X \times_{q^*f^*B} q^*f^*(A)) \\
&\simeq g_!(q^*X \times_{g^*p_i^*B} g^*p_i^*A ) \\
&\simeq g_!(q^*X) \times_{p_i^*B} p_i^*A \\
&\simeq p_i^*f_!(X) \times_{p_i^*B} p_i^*A \\
&\simeq p_i^*( f_!(X) \times_B A )
\end{align*}
where in the third isomorphism we use that $g$ is locally connected.
\end{proof}

\begin{proposition}[Stability] \label{prop:stability}
Suppose that $g \bcp_p f$ with $p$ an inclusion. If $f$ is essential, then $g$ is essential as well.
\end{proposition}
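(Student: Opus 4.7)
The plan is to produce an explicit left adjoint to $g^*$ directly from the given data. A key preliminary observation is that inclusions of toposes are stable under pullback, so because the square in $g \bcp_p f$ is a pullback and $p$ is an inclusion, the morphism $q$ is again an inclusion. In particular $q_*$ is fully faithful, i.e.\ the counit $q^* q_* \to \mathrm{id}_{\Fcal'}$ is an isomorphism.

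With this in hand, I would define
\[
g_! := p^* \circ f_! \circ q_* \; : \; \Fcal' \to \Ecal',
\]
which makes sense because $f$ is essential, and then verify $g_! \dashv g^*$ by assembling the adjunctions already at our disposal. Concretely, for $X$ in $\Fcal'$ and $Y$ in $\Ecal'$, chaining the adjunctions $p^* \dashv p_*$ and $f_! \dashv f^*$ with the Beck--Chevalley isomorphism $f^* p_* \simeq q_* g^*$ and the full faithfulness of $q_*$ yields
\begin{align*}
\Hom_{\Ecal'}(g_! X, Y) &\simeq \Hom_{\Ecal}(f_! q_* X, p_* Y) \\
&\simeq \Hom_{\Fcal}(q_* X, f^* p_* Y) \\
&\simeq \Hom_{\Fcal}(q_* X, q_* g^* Y) \\
&\simeq \Hom_{\Fcal'}(X, g^* Y),
\end{align*}
and these bijections are natural in both $X$ and $Y$ since each of the four ingredients is.

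There is no serious obstacle here: the only nontrivial input is the stability of inclusions under pullback (which is standard), and everything else reduces to the insight that $p^* f_! q_*$ is the right candidate for $g_!$. The Beck--Chevalley hypothesis is exactly what bridges the ``outer'' adjunctions coming from $p$ and the ``inner'' adjunction of $f$, while the inclusion hypothesis on $p$ is what makes $q_*$ fully faithful so that the final step of the chain closes up.
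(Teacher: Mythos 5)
Your proposal is correct and is essentially identical to the paper's own proof: the paper also takes $g_! = p^* f_! q_*$ and verifies the adjunction via the same chain of natural bijections, using the Beck--Chevalley isomorphism and the fact that $q$, as a pullback of the inclusion $p$, is itself an inclusion so that $q_*$ is fully faithful. The only cosmetic difference is that the paper compresses the first two adjunction steps into one.
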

\begin{proof}
We write $q$ for the pullback of $p$ along $f$, so we have a pullback diagram of the form
\begin{equation}
\begin{tikzcd}
\Fcal' \ar[r,"{q}"] \ar[d,"{g}"'] & \Fcal \ar[d,"{f}"] \\
\Ecal' \ar[r,"{p}"] & \Ecal 
\end{tikzcd}
\end{equation}
We claim that $p^*f_!q_*$ is a left adjoint for $g^*$. We compute:
\begin{align*}
\Hom_{\Ecal'}(p^*f_!q_*X,Y) &\simeq \Hom_\Fcal(q_*X,f^*p_*Y) \\
&\simeq \Hom_{\Fcal}(q_*X, q_*g^*Y) \\
&\simeq \Hom_{\Fcal'}(X,g^*Y)
\end{align*}
where in the second natural bijection we use the Beck--Chevalley condition, and in the third natural bijection we use that $q$ is an inclusion (as pullback of the inclusion $p$). It follows that $p^*f_!q_*$ is the left adjoint of $g^*$, so $g$ is essential.
\end{proof}

We will also need the following well-known characterization of cartesian closedness for the inverse image functor.

\begin{proposition}[Cartesian closedness] \label{prop:characterization-cartesian-closed}
Let $f : \Fcal \to \Ecal$ be a geometric morphism. Then the following are equivalent:
\begin{enumerate}
\item $f^*$ is cartesian closed;
\item $(f/E) \bcp_{\pi_E} f$ for every object $E$ in $\Ecal$, with $\pi_E : \Ecal/E \to \Ecal$ the \'etale geometric morphism corresponding to $E$ and $f/E$ the pullback of $f$ along $\pi_E$. 
\end{enumerate}
\end{proposition}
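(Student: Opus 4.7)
The plan is to use the classical formula expressing the direct image of an \'etale geometric morphism $\pi_E : \Ecal/E \to \Ecal$ in terms of exponentials: for any object $p : Y \to E$ of $\Ecal/E$, the object $(\pi_E)_*(Y \to E)$ is the object of sections of $p$, and may be presented as the pullback
\begin{equation*}
\begin{tikzcd}
(\pi_E)_*(Y \to E) \ar[r] \ar[d] & Y^E \ar[d,"{p^E}"] \\
1 \ar[r,"{\ulcorner \mathrm{id}_E \urcorner}"] & E^E
\end{tikzcd}
\end{equation*}
in $\Ecal$; the same formula computes $(\pi_{f^*E})_*$ in $\Fcal$. Both directions of the equivalence will then follow from short manipulations with this formula, combined with the fact that $f^*$ is automatically left exact.

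For $(2) \Rightarrow (1)$, given $A, B \in \Ecal$, I specialize the Beck--Chevalley isomorphism $f^*(\pi_A)_* \xrightarrow{\sim} q_* (f/A)^*$ to the object $\pi_A^* B = (A \times B \to A)$ in $\Ecal/A$, where $q = \pi_{f^*A}$. Since $(\pi_A)_*(\pi_A^* B) = B^A$ by the standard adjunction, and $(f/A)^* \pi_A^* B = q^* f^* B$ by commutativity of the pullback square, this yields an isomorphism $f^*(B^A) \xrightarrow{\sim} q_* q^* f^* B = (f^*B)^{f^*A}$. A short diagram chase, using naturality of the Beck--Chevalley map and the fact that $(\pi_A)_*\pi_A^* \dashv \pi_A^*\pi_A^*$-style adjunctions give the exponential adjunction on the nose, identifies this with the canonical comparison map, so $f^*$ preserves exponentials.

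For $(1) \Rightarrow (2)$, fix $E \in \Ecal$ and $(Y \to E) \in \Ecal/E$, and apply $f^*$ to the pullback square displayed above. Since $f^*$ preserves finite limits and the terminal object, and by hypothesis preserves exponentials, the resulting square in $\Fcal$ is canonically isomorphic to the analogous pullback that computes $q_*\bigl((f/E)^*(Y \to E)\bigr)$. This produces the desired natural isomorphism $f^*(\pi_E)_* \xrightarrow{\sim} q_*(f/E)^*$, and the pullback diagram is automatic because $\Fcal/f^*E$ is already the pullback of $\pi_E$ along $f$. The main obstacle is purely bookkeeping: in both directions one must verify that the isomorphism produced through the pullback formula really coincides with the canonical Beck--Chevalley comparison map (rather than merely being some isomorphism between the same objects), which amounts to a standard but slightly fiddly check of compatibility with units and counits of the various adjunctions involved.
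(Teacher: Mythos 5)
Your argument is correct in outline, but it takes a genuinely different route from the paper's. The paper's proof exploits that $f$ is essential (for $(1)\Rightarrow(2)$ this follows from Barr--Par\'e \cite{barr-pare}) and replaces the Beck--Chevalley map $f^*(\pi_E)_* \to (\tilde{\pi}_E)_*(f/E)^*$ by its mate $(f/E)_!\tilde{\pi}_E^* \to \pi_E^*f_!$ with respect to the left adjoints $f_!$ and $(f/E)_!$; evaluated at $X$ in $\Fcal$ this mate is exactly the Frobenius map $f_!(X\times f^*E)\to f_!(X)\times E$, and \cite[Lemma A1.5.8]{johnstone-elephant} says $f^*$ is cartesian closed iff all these are invertible. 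You instead stay entirely on the side of the right adjoints, computing $(\pi_E)_*$ explicitly as an object of sections (the pullback of $Y^E\to E^E\leftarrow 1$) and checking that a cartesian closed $f^*$ preserves this pullback, resp.\ extracting the exponential comparison $f^*(B^A)\to(f^*B)^{f^*A}$ by evaluating the Beck--Chevalley map at the single object $\pi_A^*B$ of $\Ecal/A$. What your route buys: it never mentions $f_!$, so it proves the equivalence exactly as stated without presupposing that $f$ is essential (the paper's argument for $(2)\Rightarrow(1)$ needs $f_!$ to exist before it can form the mate). What it costs: the ``fiddly'' identifications you defer are genuine obligations --- one must check that the Beck--Chevalley map at $\pi_A^*B$ unwinds, via the unit of $\tilde{\pi}_A^*\dashv(\tilde{\pi}_A)_*$ and the counit of $\pi_A^*\dashv(\pi_A)_*$ (i.e.\ evaluation), to the transpose of $f^*(B^A)\times f^*A\simeq f^*(B^A\times A)\xrightarrow{f^*(\mathrm{ev})} f^*B$, which is indeed the canonical comparison; and that under the comparison isomorphism $f^*(E^E)\simeq(f^*E)^{f^*E}$ the point $f^*(\ulcorner\mathrm{id}_E\urcorner)$ becomes $\ulcorner\mathrm{id}_{f^*E}\urcorner$ --- but both are routine and do check out. (Your aside about ``$(\pi_A)_*\pi_A^*\dashv\pi_A^*\pi_A^*$-style adjunctions'' is garbled as written; what is actually used is that $(\pi_A)_!\dashv\pi_A^*\dashv(\pi_A)_*$ composes to $(-)\times A\dashv(-)^A$, but nothing in the argument rests on that phrasing.)
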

\begin{proof}
For $E$ in $\Ecal$, consider the pullback diagram
\begin{equation*}
\begin{tikzcd}
\Fcal/f^*(E) \ar[r,"{\tilde{\pi}_E}"] \ar[d,"{f/E}"'] & \Fcal \ar[d,"{f}"] \\
\Ecal/E \ar[r,"{\pi_E}"] & \Ecal 
\end{tikzcd}.
\end{equation*}
Both $f$ and $f/E$ are essential, so the Beck--Chevalley isomorphism is in this case given by
\begin{equation*}
(f/E)_!\tilde{\pi}_E^* \simeq \pi_E^*f_!.
\end{equation*}
This amounts to the condition that the morphism
\begin{equation*}
\theta_{F,E} : f_!(F \times f^*(E)) \to f_!(F) \times E
\end{equation*}
is an isomorphism, for each $F$ in $\Fcal$. This is precisely the Frobenius map, and $f^*$ is cartesian closed if and only if $\theta_{F,E}$ is an isomorphism for all $F$ and $E$ (see \cite[Lemma A1.5.8]{johnstone-elephant}). We conclude that $f^*$ is cartesian closed if and only if $(f/E)\bcp_{\pi_E} f$ for each object $E$ in $\Ecal$.
\end{proof}

\section{Jacobson topological spaces and Jacobson \'etendues}

An elementary topos $\Ecal$ will be called \textbf{EILC} if any essential geometric morphism $f : \Fcal\to\Ecal$ is locally connected. We will first show that $\Sh(X)$ is EILC for any Jacobson topological space $X$.

\begin{definition} \label{def:jacobson-space}
Let $X$ be a topological space, and let $X_0 \subseteq X$ be its subspace of closed points. We then say that $X$ is \textbf{Jacobson} if $U \cap X_0 = V \cap X_0$ implies $U = V$, for all open subsets $U,V \subseteq X$.
\end{definition}

Equivalently, $X$ is Jacobson if and only if for every closed subset $Z \subseteq X$, the subset $Z \cap X_0 \subseteq Z$ is dense, see \cite[\href{https://stacks.math.columbia.edu/tag/005T}{Section 005T}]{stacks-project}. However, Definition \ref{def:jacobson-space} is more natural from a topos-theoretic point of view: it says precisely that the closed points of $X$ define a jointly surjective family of points for $\Sh(X)$.

If $X$ is the spectrum of a commutative ring $R$, with the Zariski topology, then $X$ is Jacobson if and only if $R$ is a Jacobson ring, in the sense that each prime ideal is an intersection of maximal ideals \cite[\href{https://stacks.math.columbia.edu/tag/00G3}{Lemma 00G3}]{stacks-project}.

We can generalize the notion of Jacobson topological space over an arbitrary base elementary topos $\Scal$ as follows: we say that a localic geometric morphism $e : \Ecal \to \Scal$ is a \textbf{Jacobson space} if $e$ is localic and the family of closed points $p : \Scal \to \Ecal$ is jointly surjective (points are by definition sections of $e$, i.e.\ $ep\simeq 1$). If $e : \Ecal \to \Scal$ is a Jacobson space over $\Scal$, then in particular $\Ecal$ has enough points over $\Scal$. Note that if $e$ is localic, then any point $p : \Scal \to \Ecal$ is an inclusion. Indeed, we have a pullback of the form
\begin{equation*}
\begin{tikzcd}
\Scal \ar[r,"{p}"] \ar[d,"{p}"'] & \Ecal \ar[d,"{(pe,1_\Ecal)}"] \\
\Ecal \ar[r,"{\Delta}"] & \Ecal \times_\Scal \Ecal
\end{tikzcd}
\end{equation*}
and because $e$ is localic, the diagonal morphism $\Delta$ is an inclusion \cite[Proposition B3.3.8(ii)]{johnstone-elephant}. It then follows that its pullback $p$ is an inclusion as well. So, in this setting, $p : \Scal \to \Ecal$ is closed as geometric morphism (in the sense of \cite[C3.2, p.629]{johnstone-elephant}) if and only if $p$ defines a closed subtopos.

\begin{lemma} \label{lmm:EILC-from-closed-subtoposes}
Let $\Ecal$ be an elementary topos with a natural number object, and let $\{ p_i : \Ecal_i \to \Ecal \}_{i \in I}$ be a jointly surjective family, with each $p_i$ a closed inclusion and with each $\Ecal_i$ EILC. Then $\Ecal$ is EILC as well.
\end{lemma}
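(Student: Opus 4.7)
The plan is to take an arbitrary essential geometric morphism $f : \Fcal \to \Ecal$ and verify that it is locally connected by applying the descent result, Proposition \ref{prop:descent}, to the jointly surjective family $\{p_i\}_{i \in I}$. So it suffices to exhibit, for each $i \in I$, a locally connected geometric morphism $g_i$ with $g_i \bc_{p_i} f$.

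The natural candidate for $g_i$ is the pullback of $f$ along $p_i$. First I would check this pullback exists: since $p_i$ is a closed inclusion, it is in particular localic and hence bounded, so the pullback diagram
\begin{equation*}
\begin{tikzcd}
\Fcal_i \ar[r,"{q_i}"] \ar[d,"{g_i}"'] & \Fcal \ar[d,"{f}"] \\
\Ecal_i \ar[r,"{p_i}"] & \Ecal
\end{tikzcd}
\end{equation*}
is defined. Next I would establish the Beck--Chevalley condition $g_i \bcp_{p_i} f$. The hypotheses of Proposition \ref{prop:locally-connected-tidy}(3) apply: $p_i$ is bounded, $p_i$ is tidy (every closed inclusion is tidy, as noted in the preamble to that proposition), and $\Ecal$ has a natural number object by assumption.

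Having a Beck--Chevalley pullback square along the inclusion $p_i$ with $f$ essential, Proposition \ref{prop:stability} gives that $g_i$ is essential as well. Since $\Ecal_i$ is EILC, this forces $g_i$ to be locally connected. Therefore $f$ is locally connected at $p_i$ in the sense of the definition above Proposition \ref{prop:descent}. As $i \in I$ was arbitrary and the family is jointly surjective, Proposition \ref{prop:descent} concludes that $f$ is locally connected.

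There is no real obstacle: the whole argument is an orchestration of the four ingredients assembled earlier (closed inclusions are tidy and bounded, Beck--Chevalley transfer along tidy morphisms requires a natural number object, essentialness descends along inclusions that satisfy Beck--Chevalley, and locally-connectedness can be checked pointwise along a jointly surjective family). The one place that needs some care is making sure the closed-inclusion hypothesis really gets used at the right spot, namely to invoke part (3) of Proposition \ref{prop:locally-connected-tidy} rather than any statement requiring $f$ itself to be bounded or locally connected.
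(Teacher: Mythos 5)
Your proposal is correct and follows essentially the same route as the paper: pull back $f$ along each closed inclusion $p_i$, obtain $g_i \bcp_{p_i} f$ from Proposition \ref{prop:locally-connected-tidy}(3) (using boundedness and tidiness of closed inclusions plus the natural number object), deduce essentialness of $g_i$ from Proposition \ref{prop:stability}, hence local connectedness from the EILC hypothesis on $\Ecal_i$, and conclude via Proposition \ref{prop:descent}. No gaps.
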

\begin{proof}
Take an essential geometric morphism $f : \Fcal \to \Ecal$. We will show that $f$ is locally connected. For each $p_i$, we consider the pullback diagram
\begin{equation*}
\begin{tikzcd}
\Fcal_i \ar[r,"{q_i}"] \ar[d,"{f_i}"'] & \Fcal \ar[d,"{f}"] \\
\Ecal_i \ar[r,"{p_i}"] & \Ecal 
\end{tikzcd}.
\end{equation*}
Because $p_i$ is a closed inclusion, it is in particular tidy \cite[Chapter III, Corollary 5.8]{moerdijk-vermeulen}. Further, any inclusion is bounded. So by Proposition \ref{prop:locally-connected-tidy}.(3) we have $f_i \bcp_{p_i} f$. It now follows from Proposition \ref{prop:stability} that $f_i$ is essential. Because $\Ecal_i$ is by assumption EILC, it follows that $f_i$ is locally connected. As a result, $f$ is locally connected at $p_i$, for each $i \in I$. Using Proposition \ref{prop:descent} and the fact that the family $\{p_i\}_{i \in I}$ is jointly surjective, we can then conclude that $f$ is locally connected.
\end{proof}

\begin{theorem} \label{thm:jacobson-space-is-EILC}
Let $X$ be a Jacobson topological space. Then $\Sh(X)$ is EILC.
\end{theorem}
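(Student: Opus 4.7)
The plan is to apply Lemma \ref{lmm:EILC-from-closed-subtoposes} directly, taking as the jointly surjective family the closed subtopos inclusions arising from the closed points of $X$. This requires three things: (a) the topos $\Sets$ is EILC, (b) every closed point $x \in X_0$ gives rise to a closed inclusion $p_x : \Sets \to \Sh(X)$, and (c) the Jacobson condition on $X$ is equivalent to the family $\{p_x\}_{x \in X_0}$ being jointly surjective. Since $\Sh(X)$ is a Grothendieck topos it automatically has a natural number object, so the hypotheses of Lemma \ref{lmm:EILC-from-closed-subtoposes} will be satisfied once (a)--(c) are in place.

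For (a), I would verify directly that any essential geometric morphism $f : \Fcal \to \Sets$ is locally connected. Given a map $A \to B$ in $\Sets$, write $B = \coprod_{b \in B} \{b\}$ and $A = \coprod_{b \in B} A_b$ with $A_b$ the fiber over $b$. Then $f^*B \simeq \coprod_b 1_\Fcal$ and, since coproducts are stable under pullback in $\Fcal$, any object $X \to f^*B$ decomposes as $X \simeq \coprod_b X_b$. A short calculation gives $X \times_{f^*B} f^*A \simeq \coprod_b (X_b \times f^*(A_b))$, where $f^*(A_b)$ is an $A_b$-indexed copower of $1_\Fcal$. Because $f_!$ preserves coproducts as a left adjoint, both sides of the Frobenius map \eqref{eq:frobenius} collapse to $\coprod_b A_b \cdot f_!(X_b)$ and the map is an isomorphism. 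I include this step because I do not see the EILC property of $\Sets$ asserted explicitly in the excerpt.

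For (b) and (c), note that for any closed point $x \in X_0$, the closed set $\{x\} \subseteq X$ is the closed complement of the open set $U = X \setminus \{x\}$, so the inclusion of the point topos $\Sh(\{x\}) \simeq \Sets$ into $\Sh(X)$ is precisely the closed subtopos complementary to the open subtopos $\Sh(U) \hookrightarrow \Sh(X)$. Hence $p_x : \Sets \to \Sh(X)$ is a closed inclusion in the topos-theoretic sense. The Jacobson condition from Definition \ref{def:jacobson-space}, as the excerpt already remarks after that definition, says exactly that the closed points define a jointly surjective family of points of $\Sh(X)$, i.e.\ that $\{p_x\}_{x \in X_0}$ is jointly surjective.

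With these ingredients, Lemma \ref{lmm:EILC-from-closed-subtoposes} applies and yields that $\Sh(X)$ is EILC. I anticipate the only non-routine step to be the verification that $\Sets$ itself is EILC; the rest is assembly of material already established in the excerpt.
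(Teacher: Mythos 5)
Your proof is correct and follows essentially the same route as the paper: the paper likewise applies Lemma \ref{lmm:EILC-from-closed-subtoposes} to the jointly surjective family of closed inclusions $\{p_x : \Sets \to \Sh(X)\}_{x \in X_0}$ indexed by the closed points, using that $\Sets$ is EILC. The only difference is that you spell out the (standard, and correct) copower argument showing that every essential geometric morphism into $\Sets$ is locally connected, a fact the paper simply asserts.
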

\begin{proof}
Let $X$ be a Jacobson topological space, and let $X_0 \subseteq X$ be the subset of closed points. Then the family $\{ p_x : \Sets \to \Sh(X) \}_{x \in X_0}$ is a jointly surjective family of closed inclusions, where $p_x$ denotes the closed inclusion corresponding to the closed subset $\{x\} \subseteq X$. Further, $\Sets$ is EILC, so from Lemma \ref{lmm:EILC-from-closed-subtoposes} we deduce that $\Sh(X)$ is EILC as well.
\end{proof}

The proof above generalizes to a Jacobson space $f : \Ecal \to \Scal$, for $\Scal$ an EILC base topos, under the assumption that $\Ecal$ has a natural numbers object. In this case, by definition the family of closed points $\Scal \to \Ecal$ is jointly surjective, so as soon as $\Scal$ is EILC we can apply Lemma \ref{lmm:EILC-from-closed-subtoposes}.

We can also generalize the notion of Jacobson space in another direction as follows. Recall that an object $E$ of a topos $\Ecal$ is called \textbf{well-supported} if the unique morphism $E \to 1$ is an epimorphism. Further, an \textbf{\'etendue} is a topos $\Ecal$ such that there is a well-supported object $E$ in $\Ecal$ such that $\Ecal/E$ is localic (over the base topos $\Scal$).

\begin{definition}
Fix an elementary topos $\Scal$. A geometric morphism $e : \Ecal \to \Scal$ will be called a \textbf{Jacobson \'etendue} if there is a well-supported object $E$ in $\Ecal$ such that the composition 
\begin{equation*}
\Ecal/E \stackrel{\pi}{\to} \Ecal \stackrel{e}{\to} \Scal
\end{equation*}
is a Jacobson space. 
\end{definition}

If $e : \Ecal \to \Scal$ is a Jacobson space, then it is also a Jacobson \'etendue; in this case we can take $E=1$. 

\begin{example}
We give two examples of Grothendieck toposes that are Jacobson \'etendues (over $\Sets$).
\begin{enumerate}
\item $\PSh(G)$ for $G$ a group is a Jacobson \'etendue. Indeed, we can take $E=G$ with its standard right $G$-action, and then $\PSh(G)/G \simeq \Sets$.
\item The J\'onsson--Tarski topos $\mathcal{J}$ is a Jacobson \'etendue. Here we can take $E$ to be the free J\'onsson-Tarski algebra on one generator, and then $\mathcal{J}/E \simeq \Sh(X)$, for $X$ the Cantor space, see \cite[Proposition 8.5.2]{bunge-funk}. The Cantor space is Hausdorff, so it is in particular Jacobson.
\end{enumerate}
\end{example}

\begin{lemma} \label{lmm:EILC-is-local}
Let $\Ecal$ be an elementary topos and let $E$ be a well-supported object of $\Ecal$. If $\Ecal/E$ is EILC, then $\Ecal$ is EILC as well.
\end{lemma}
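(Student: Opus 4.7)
The plan is to pull back $f : \Fcal \to \Ecal$ along the \'etale geometric morphism $\pi_E : \Ecal/E \to \Ecal$, use the EILC hypothesis on $\Ecal/E$ to obtain local connectedness of the pullback $f/E$, and then descend this back to $f$ via a \v{C}ech-nerve argument, exploiting that $E$ is well-supported. A first instinct is to apply Proposition \ref{prop:descent} to the (already surjective) singleton family $\{\pi_E\}$ with $g = f/E$; this would however require the Beck--Chevalley isomorphism $\pi_E^* f_! \simeq (f/E)_!\tilde{\pi}^*$, which unwinds to the Frobenius iso $f_!(Y) \times E \simeq f_!(Y \times f^*E)$ --- a special case of what we are trying to establish. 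So we must argue more concretely.

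Given an essential $f : \Fcal \to \Ecal$, form the pullback
\begin{equation*}
\begin{tikzcd}
\Fcal/f^*E \ar[r,"{\tilde{\pi}}"] \ar[d,"{f/E}"'] & \Fcal \ar[d,"{f}"] \\
\Ecal/E \ar[r,"{\pi_E}"] & \Ecal,
\end{tikzcd}
\end{equation*}
which exists because $\pi_E$ is bounded. One checks that $f/E$ is essential: its left adjoint sends $(Y \to f^*E)$ to the composite $f_!Y \to f_!f^*E \to E$. By the EILC hypothesis, $f/E$ is locally connected. Specializing the Frobenius condition for $f/E$ to morphisms of the form $\pi_E^*(A \to B)$ and forgetting the structure map to $E$, one obtains: for every $X' \in \Fcal$ equipped with a chosen map $X' \to f^*E$ and a map $X' \to f^*B$, the Frobenius morphism $f_!(X' \times_{f^*B} f^*A) \to f_!(X') \times_B A$ is an isomorphism in $\Ecal$.

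For an arbitrary $X \to f^*B$ in $\Fcal$, I would descend via the \v{C}ech nerve of the epimorphism $X \times U \to X$, where $U := f^*E$. Since $E \to 1$ is epi and $f^*$ preserves epis, $U \to 1$ is epi in $\Fcal$, and its pullback $X \times U \to X$ is epi; in a topos every epi is effective, so $X = \mathrm{coeq}(X \times U \times U \rightrightarrows X \times U)$ in $\Fcal/f^*B$. Both $- \times_{f^*B} f^*A$ (a left adjoint in the locally cartesian closed $\Fcal$) and $f_!$ preserve colimits, hence
\begin{equation*}
f_!(X \times_{f^*B} f^*A) \simeq \mathrm{coeq}\bigl(f_!((X \times U \times U) \times_{f^*B} f^*A) \rightrightarrows f_!((X \times U) \times_{f^*B} f^*A)\bigr).
\end{equation*}
Each $X \times U^n$ ($n \geq 1$) admits a projection to $U = f^*E$, so the previous paragraph applies termwise. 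Invoking naturality of Frobenius and the fact that $- \times_B A$ preserves colimits as a left adjoint, this rewrites as $\mathrm{coeq}(f_!(X \times U \times U) \rightrightarrows f_!(X \times U)) \times_B A = f_!(X) \times_B A$, giving the Frobenius isomorphism for $f$ at the general object $X$.

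The main technical hurdle I anticipate is the naturality check in the last step: the face maps of the \v{C}ech nerve live in $\Fcal/f^*B$ but not in $\Fcal/f^*E$ for any fixed map to $f^*E$, so one must verify that the Frobenius isomorphisms at the three \v{C}ech levels (each possibly using different chosen maps to $f^*E$) assemble into an honest isomorphism of coequalizer diagrams.
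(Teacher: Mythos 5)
Your proof is correct, and its first half coincides with the paper's: slice $f$ along $E$, observe that $f/E$ is essential (the paper cites \cite[Lemma 5.2]{lawvere-menni} for this), and use the EILC hypothesis on $\Ecal/E$ to conclude that $f/E$ is locally connected. Where you diverge is the descent step. The paper disposes of it in one line by citing \cite[Corollary C3.3.2(iv)]{johnstone-elephant}: local connectedness can be checked after base change along an \'etale surjection, and $\pi_E$ is such a surjection because $E$ is well-supported. You instead reprove this special case by hand, writing $X$ as the coequalizer of the kernel pair of the effective epimorphism $X \times f^*E \to X$ and using that $f_!$ and the relevant pullback functors preserve colimits; your diagnosis that a naive application of Proposition \ref{prop:descent} to the singleton family $\{\pi_E\}$ would be circular is also accurate. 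The ``technical hurdle'' you flag at the end is not a genuine obstruction: the Frobenius morphism $\theta_Y : f_!(Y \times_{f^*B} f^*A) \to f_!(Y)\times_B A$ is a single natural transformation between functors on $\Fcal/f^*B$, defined from the adjunction $f_! \dashv f^*$ alone with no reference to $f^*E$; the chosen projection $X \times U^n \to f^*E$ enters only into the proof that the component $\theta_{X\times U^n}$ is invertible (by matching it with the Frobenius morphism of $f/E$, which forgets to $\theta$ along $\Ecal/E \to \Ecal$ independently of that choice). So the isomorphisms at the three \v{C}ech levels automatically assemble into an isomorphism of coequalizer diagrams. The trade-off is the usual one: the paper's proof is a three-line citation, while yours is self-contained and makes visible exactly where well-supportedness of $E$ is used.
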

\begin{proof}
Let $f : \Fcal \to \Ecal$ be an essential geometric morphism. The slice 
\begin{equation*}
f/E : \Fcal/f^*E \to \Ecal/E
\end{equation*}
is then also essential \cite[Lemma 5.2]{lawvere-menni}. Because $\Ecal/E$ is EILC, we see that $f/E$ is locally connected. Note that $f/E$ is the base change of $f$ along the \'etale geometric morphism $\pi : \Ecal/E \to \Ecal$. Because $E$ is well-supported, the \'etale geometric morphism is a surjection. It follows that $f$ is locally connected as well, because local connectedness can be checked after base change along an \'etale surjection, see for example \cite[Corollary C3.3.2(iv)]{johnstone-elephant}.
\end{proof}

Let $\Ecal$ be an elementary topos with a natural number object. If $\Ecal$ is a Jacobson \'etendue over an EILC base topos $\Scal$, then we can take a well-supported object $E$ in $\Ecal$ such that $\Ecal/E$ is a Jacobson space over $\Scal$. By Theorem \ref{thm:jacobson-space-is-EILC}, it follows that $\Ecal/E$ is EILC. So by applying Lemma \ref{lmm:EILC-is-local}, we find that $\Ecal$ is EILC. In summary:

\begin{corollary} \label{cor:jacobson-is-eilc}
Let $\Ecal$ be an elementary topos with a natural numbers object. If $\Ecal$ is a Jacobson \'etendue over an EILC base topos $\Scal$, then $\Ecal$ is EILC as well.
\end{corollary}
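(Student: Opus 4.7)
The plan is to chain together the two preceding lemmas, namely Lemma \ref{lmm:EILC-from-closed-subtoposes} and Lemma \ref{lmm:EILC-is-local}, via the definition of a Jacobson \'etendue. This is exactly the proof sketched in the paragraph immediately preceding the corollary statement, so my task is to spell it out cleanly.

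First I would unwind the definition: by hypothesis there exists a well-supported object $E$ in $\Ecal$ such that the composition $e \circ \pi : \Ecal/E \to \Ecal \to \Scal$ is a Jacobson space over $\Scal$. This means that $e \circ \pi$ is localic and the family of closed points $\{p_i : \Scal \to \Ecal/E\}_{i \in I}$ (i.e.\ sections of $e \circ \pi$) is jointly surjective. As recalled earlier in the section, localicity forces each such section $p_i$ to be an inclusion, and being a closed point in this setting means precisely defining a closed subtopos; thus each $p_i$ is a closed inclusion.

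Next I would apply Lemma \ref{lmm:EILC-from-closed-subtoposes} to the topos $\Ecal/E$: it is an elementary topos with a natural numbers object (slices inherit NNOs from $\Ecal$), the family $\{p_i\}_{i \in I}$ is jointly surjective by closed inclusions, and each $\Ecal_i = \Scal$ is EILC by hypothesis. The lemma then yields that $\Ecal/E$ is EILC. Finally, because $E$ is well-supported, Lemma \ref{lmm:EILC-is-local} lets us pass from $\Ecal/E$ EILC back to $\Ecal$ EILC, completing the proof.

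There is no real obstacle here; the only small bookkeeping point is checking that the ingredients line up with the hypotheses of Lemma \ref{lmm:EILC-from-closed-subtoposes} (in particular the existence of the natural numbers object on $\Ecal/E$ and the closedness of the inclusions $p_i$), both of which are immediate from what has been set up in the section.
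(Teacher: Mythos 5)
Your proof is correct and follows essentially the same route as the paper: unwind the definition of Jacobson \'etendue to get a well-supported $E$ with $\Ecal/E \to \Scal$ a Jacobson space, apply Lemma \ref{lmm:EILC-from-closed-subtoposes} to the jointly surjective family of closed points (which are closed inclusions by the localicity discussion) to get that $\Ecal/E$ is EILC, and then conclude via Lemma \ref{lmm:EILC-is-local}. The bookkeeping points you flag (NNO on the slice, closedness of the points) are exactly the ones the paper handles in the surrounding remarks.
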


In particular, $\PSh(G)$ is EILC for any group $G$, and the J\'onsson--Tarski topos is EILC.

\section{Boolean \'etendues and compact topological groups}
\label{sec:boolean-etendues}

In this section we restrict to Grothendieck toposes, i.e.\ toposes bounded over the topos of sets. We will show that both Boolean \'etendues and classifying toposes of compact topological groups are EILC. Afterwards, we show that a presheaf topos $\PSh(\Ccal)$ is EILC if and only if $\Ccal$ is a groupoid. 

For both Boolean \'etendues and classifying toposes of compact topological groups, the argument can be simplified using the following lemma:

\begin{lemma} \label{lmm:EILC-for-stable-families}
Let $\mathcal{A}$ be a family of toposes with the following properties:
\begin{enumerate}
\item if $\Ecal$ is in $\mathcal{A}$, then also $\Ecal/E$ is in $\mathcal{A}$, for any object $E$ in $\Ecal$;
\item for any essential geometric morphism $f : \Fcal \to \Ecal$, with $\Ecal$ in $\mathcal{A}$, the inverse image functor $f^*$ is cartesian closed.
\end{enumerate}
Then all toposes in $\mathcal{A}$ are EILC.
\end{lemma}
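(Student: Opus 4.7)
The plan is to reduce local connectedness of any essential $f : \Fcal \to \Ecal$ with $\Ecal \in \mathcal{A}$ to cartesian closedness of the inverse image functors of its slices $f/B : \Fcal/f^*B \to \Ecal/B$. Recall that $f$ is locally connected precisely when it is essential and the Frobenius map
\[
f_!(X\times_{f^*B} f^*A)\longrightarrow f_!(X)\times_B A
\]
is an isomorphism for all $A \to B$ in $\Ecal$ and $X \to f^*B$ in $\Fcal$. Reading the data $(X \to f^*B)$ and $(A \to B)$ as objects of $\Fcal/f^*B$ and $\Ecal/B$, and using that products in these slice toposes are fibre products in the ambient topos, this Frobenius condition is exactly the statement that $(f/B)_!\bigl(Y \times (f/B)^* Z\bigr) \simeq (f/B)_!(Y) \times Z$ for all $Y$ in $\Fcal/f^*B$ and $Z$ in $\Ecal/B$. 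By Proposition \ref{prop:characterization-cartesian-closed} applied to $f/B$, this is in turn equivalent to $(f/B)^*$ being cartesian closed.

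Given this reformulation, the lemma follows directly from the two hypotheses. Fix $\Ecal \in \mathcal{A}$ and an essential geometric morphism $f : \Fcal \to \Ecal$. For any object $B$ of $\Ecal$, condition (1) yields $\Ecal/B \in \mathcal{A}$, and the slice morphism $f/B$ is again essential, with left adjoint sending $(X \to f^*B)$ to the composite $f_!(X) \to f_!f^*B \to B$ built from the counit of $f_! \dashv f^*$ (as already used in the proof of Lemma \ref{lmm:EILC-is-local}). Hypothesis (2) therefore applies to $f/B$ and forces $(f/B)^*$ to be cartesian closed. Since $B$ was arbitrary, the reformulation above gives that $f$ is locally connected.

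The main, and essentially only, point needing care is the rephrasing of the general Frobenius condition for $f$ as cartesian closedness of every $(f/B)^*$. Once this is in place, the proof is two applications of the hypotheses, and uses no further information about $\mathcal{A}$ beyond closure under slicing.
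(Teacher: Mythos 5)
Your proof is correct and follows essentially the same route as the paper: reduce local connectedness of $f$ to cartesian closedness of the inverse image functors of all its slices $f/B$, then invoke hypotheses (1) and (2). The only difference is that the paper simply cites this slicewise characterization of local connectedness (\cite[Proposition C3.3.1]{johnstone-elephant}), whereas you rederive it from the Frobenius formulation of local connectedness together with Proposition \ref{prop:characterization-cartesian-closed}, which is a valid unfolding of the definitions.
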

\begin{proof}
For $\Ecal$ in $\Acal$, we have to show that any essential geometric morphism $f : \Fcal \to \Ecal$ is locally connected. This follows from the following characterization of local connectedness: $f$ is locally connected if and only if its slice $f/E : \Fcal/f^*E \to \Ecal/E$ has cartesian closed inverse image functor, for all objects $E$ in $\Ecal$ \cite[Proposition C3.3.1]{johnstone-elephant}. If $f$ is essential, then each slice $f/E$ is again essential \cite[Lemma 5.2]{lawvere-menni}, and by (1) its codomain $\Ecal/E$ is in $\Acal$. So by (2) $f/E$ has cartesian closed inverse image functor.
\end{proof}

A \textbf{Boolean \'etendue} is a topos that is both Boolean and an \'etendue. Note that if $E$ is a well-supported object of a topos $\Ecal$, then $\Ecal$ is Boolean if and only if $\Ecal/E$ is Boolean. So we can alternatively define a Grothendieck topos $\Ecal$ to be a Boolean \'etendue if there is a well-supported object $E$ in $\Ecal$ and a Boolean locale $Y$ with $\Ecal/E \simeq \Sh(Y)$.

We now show that any Grothendieck topos that is a Boolean \'etendues, is EILC. The proof is inspired by a related argument by Mat\'ias Menni, in his proof that for an arbitrary Boolean topos $\Ecal$, a connected, essential geometric morphism $f : \Fcal \to \Ecal$ is locally connected as soon as $f_!$ preserves finite products.

\begin{proposition} \label{prop:boolean-etendues-are-EILC}
Let $\Ecal$ be a Grothendieck topos. If $\Ecal$ is a Boolean \'etendue, then it is EILC.
\end{proposition}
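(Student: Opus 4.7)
The plan is to apply Lemma \ref{lmm:EILC-for-stable-families} to the family $\Acal$ of Grothendieck toposes of the form $\Sh(Y)$ with $Y$ a Boolean locale, concluding that every such topos is EILC, and then to propagate to arbitrary Boolean étendues via Lemma \ref{lmm:EILC-is-local}. Closure of $\Acal$ under slicing is quick: for any object $F$ in $\Sh(Y)$, the étale morphism $\Sh(Y)/F \to \Sh(Y)$ is localic, so the slice is again localic over $\Sets$; and since Booleanness is inherited by slices, $\Sh(Y)/F \simeq \Sh(Y')$ for some Boolean locale $Y'$, so it is again in $\Acal$.

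The heart of the argument is the second hypothesis of Lemma \ref{lmm:EILC-for-stable-families}: for any essential $f : \Fcal \to \Sh(Y)$, the Frobenius map
\[
\theta_{X,E} : f_!(X \times f^*E) \to f_!(X) \times E
\]
should be an isomorphism for all $X$ in $\Fcal$ and $E$ in $\Sh(Y)$. I would first treat the case of a subterminal $A \subseteq 1$ in $\Sh(Y)$: since $Y$ is Boolean, $A$ has a complement $A^c$ with $1 = A + A^c$, which $f^*$ transports to $1 = f^*A + f^*A^c$ in $\Fcal$, giving a decomposition $X \simeq (X \times f^*A) + (X \times f^*A^c)$. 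Applying $f_!$ yields $f_!(X) \simeq f_!(X \times f^*A) + f_!(X \times f^*A^c)$. The canonical map $f_!(X \times f^*A) \to f_!f^*A \to A$ (using the counit of $f_! \dashv f^*$) shows, $A$ being subterminal, that $f_!(X \times f^*A)$ has support contained in $A$; this gives $f_!(X \times f^*A) \times A \simeq f_!(X \times f^*A)$ and $f_!(X \times f^*A^c) \times A \simeq 0$, so $f_!(X) \times A \simeq f_!(X \times f^*A)$, and one checks this identification is indeed realized by $\theta_{X,A}$. For a general $E$, the fact that $\Sh(Y)$ is localic implies $E$ is a colimit of subterminals, and colimit preservation by $f^*$, $f_!$, and $-\times X$ extends the isomorphism from the subterminal case to all of $\Sh(Y)$.

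Once Lemma \ref{lmm:EILC-for-stable-families} yields that every $\Sh(Y) \in \Acal$ is EILC, the general Boolean étendue case follows by Lemma \ref{lmm:EILC-is-local}: pick a well-supported $F$ with $\Ecal/F \simeq \Sh(Y)$ for $Y$ a Boolean locale, and then $\Ecal/F$ being EILC forces $\Ecal$ to be EILC. The main obstacle in the plan is the subterminal Frobenius step, where both the Boolean structure (to obtain a complement of $A$) and essentiality (to use the counit $f_!f^* \to \mathrm{id}$ for the support argument) play a crucial role; the passage to general $E$ via colimits is then formal and relies only on $\Sh(Y)$ being generated by subterminals.
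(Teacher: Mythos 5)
Your proposal is correct and follows essentially the same route as the paper: reduce to localic Boolean toposes via Lemma \ref{lmm:EILC-is-local}, apply Lemma \ref{lmm:EILC-for-stable-families}, write a general object as a colimit of subterminals, and handle the subterminal case by complementation. Your support argument via the counit $f_!f^* \to \mathrm{id}$ is just a spelled-out version of the paper's remark that $\theta_{X,A}$ and $\theta_{X,A'}$ are the restrictions of the isomorphism $\theta_{X,1}$ along the decomposition $1 = A \sqcup A'$.
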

\begin{proof}
By Lemma \ref{lmm:EILC-is-local} it is enough to prove that localic Boolean Grothendieck toposes are EILC. Further, by applying Lemma \ref{lmm:EILC-for-stable-families} for $\Acal$ the family of localic Boolean Grothendieck toposes, it is enough to show that any essential geometric morphism $f : \Fcal \to \Ecal$ has cartesian closed inverse image functor, for $\Ecal$ a localic Boolean Grothendieck topos. This is equivalent to showing that for any objects $X$ in $\Fcal$ and $A$ in $\Ecal$ the natural map
\begin{equation*}
\theta_{X,A} : f_!(X \times f^*A) \to f_!(X) \times A
\end{equation*}
is an isomorphism. Because $\Ecal$ is a localic Grothendieck topos, $A$ can be written as a colimit of subterminal objects. So it is enough to prove that $\theta_{X,A}$ is an isomorphism in the special case that $A$ is subterminal (colimits are preserved by $f_!$ and $f^*$ and are stable under pullbacks). 

Now take the complement $A'$ of $A$, so $1 = A \sqcup A'$. Since $\theta_{X,1}$ is trivially an isomorphism, its restrictions $\theta_{X,A}$ and $\theta_{X,A'}$ are isomorphisms as well. Alternatively, we can argue that in the pullback diagram
\begin{equation*}
\begin{tikzcd}
\Fcal/f^*A \ar[r,"{\tilde{\pi}}"] \ar[d,"{f/A}"'] & \Fcal \ar[d,"{f}"] \\
\Ecal/A \ar[r,"{\pi}"'] & \Ecal
\end{tikzcd}
\end{equation*}
the Beck--Chevalley condition holds, because $\pi : \Ecal/A \to \Ecal$ is a closed inclusion, in particular bounded and tidy, so Proposition \ref{prop:locally-connected-tidy}(3) applies. The Beck--Chevalley condition in this case says precisely that $\theta_{X,A}$ is an isomorphism for each object $X$ in $\Fcal$. 
\end{proof}

For a topological group $G$, its classifying topos $\mathbf{Cont}(G)$ is the topos of sets equipped with a continuous action of $G$ (where the sets are seen as topological spaces with the discrete topology). We will now show that $\mathbf{Cont}(G)$ is EILC if the topological group $G$ is compact. We would not gain any generality by considering compact \textit{localic} groups, because over $\Sets$ any compact localic group has enough points \cite[Remarks 5.3.14(b)]{johnstone-elephant}.

We will simplify our argument by applying Lemma \ref{lmm:EILC-for-stable-families}. In order for this to work, we need to consider more generally toposes of the form $\bigsqcup_{i \in I} \mathbf{Cont}(G_i)$, for $(G_i)_{i \in I}$ a family of compact topological groups (the disjoint union is computed in the category of Grothendieck toposes). An object in $\bigsqcup_{i \in I} \mathbf{Cont}(G_i)$ is a family $(A_i)_{i \in I}$ with each $A_i$ an object in $\mathbf{Cont}(G_i)$. We claim that if $\Ecal$ is of the form $\bigsqcup_{i \in I} \mathbf{Cont}(G_i)$ for some family of compact topological groups $(G_i)_{i \in I}$, then $\Ecal/A$ is again of the same form, for each object $A$ in $\Ecal$.

Indeed, if $A = (A_i)_{i \in I}$ is an object in $\Ecal \simeq \bigsqcup_{i \in I} \mathbf{Cont}(G_i)$, then 
\begin{equation*}
\Ecal/A ~\simeq~ \bigsqcup_{i \in I} \mathbf{Cont}(G_i)/A_i. 
\end{equation*}
We can write each $A_i$ as a disjoint union of orbits $A_i \cong \bigsqcup_{j \in J_i} G_i/H_{ij}$, with $H_{ij} \subseteq G_i$ an open subgroup, for each $j \in J_i$. Now using the equivalence $\mathbf{Cont}(G_i)/(G_i/H_{ij}) \simeq \mathbf{Cont}(H_{ij})$, we find that
\begin{equation*}
\Ecal/A ~\simeq~ \bigsqcup_{i \in I} \bigsqcup_{j \in J_i} \mathbf{Cont}(H_{ij}).
\end{equation*}
Note that each group $H_{ij}$ is again compact, because it is an open subgroup of the compact topological group $G_i$ (and open subgroups are closed). So $\Ecal/A$ is of the same form.

For an object $A = (A_i)_{i \in I}$ in $\Ecal$, we would now like to determine when the corresponding \'etale geometric morphism $\Ecal/A \to \Ecal$ is tidy. Each topos $\mathbf{Cont}(G_i)$ has a canonical point $\Sets \to \mathbf{Cont}(G_i)$ which is an open surjection, so taking the disjoint union of these points gives an open surjection of the form
\begin{equation*}
\xi ~:~ \bigsqcup_{i \in I} \Sets \longrightarrow \Ecal.
\end{equation*}
The inverse image functor $\xi^*$ is the forgetful functor, sending a family $(A_i)_{i \in I}$ to the same family $(A_i)_{i \in I}$, but this time each $A_i$ is seen only as a set. The property of being tidy can be checked after base change along the open surjection $\xi$, so $\Ecal/A \to \Ecal$ is tidy if and only if
\begin{equation*}
\bigsqcup_{i \in I} \Sets/A_i \longrightarrow \bigsqcup_{i \in I} \Sets
\end{equation*}
is tidy. We conclude that $\Ecal/A \to \Ecal$ is tidy if and only if the underlying set of $A_i$ is finite, for all $i \in I$. This will be relevant in the next result, because of the relation between tidy geometric morphisms and the Beck--Chevalley condition.

\begin{proposition} \label{prop:compact-topological-group-EILC}
Let $\{G_i\}_{i \in I}$ be a family of compact topological groups. Then the topos $\bigsqcup_{i \in I} \mathbf{Cont}(G_i)$ is EILC.
\end{proposition}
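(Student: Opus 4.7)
The plan is to apply Lemma \ref{lmm:EILC-for-stable-families} to the family $\Acal$ consisting of all Grothendieck toposes of the form $\bigsqcup_{i \in I} \mathbf{Cont}(G_i)$ with each $G_i$ a compact topological group. Condition (1) of that lemma---closure of $\Acal$ under slicing---has already been verified in the discussion preceding the proposition, using orbit decomposition together with the fact that open subgroups of compact groups are again compact. So the real task is to establish condition (2): for every essential geometric morphism $f : \Fcal \to \Ecal$ with $\Ecal \in \Acal$, the inverse image functor $f^*$ is cartesian closed.

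By Proposition \ref{prop:characterization-cartesian-closed}, and unwinding it as in its proof, this amounts to showing that the Frobenius map $\theta_{X,A} : f_!(X \times f^*A) \to f_!(X) \times A$ is an isomorphism for all $X$ in $\Fcal$ and $A$ in $\Ecal$. I would first reduce to the case that $A$ is a single orbit. Writing $A = (A_i)_{i \in I}$ and decomposing each $A_i$ into orbits $G_i/H_{ij}$ with $H_{ij} \subseteq G_i$ open, we can realise $A$ as a coproduct in $\Ecal$ of orbit objects. Since $f_!$ and $f^*$ both preserve colimits (being left adjoints), and binary products in a topos preserve colimits in each variable, both sides of $\theta_{X,A}$ send coproducts in $A$ to coproducts. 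It is therefore enough to prove that $\theta_{X,A}$ is an isomorphism when $A$ is a single orbit $G_i/H$.

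This is where the compactness hypothesis becomes crucial. Because $G_i$ is compact and $H$ is open, the coset space $G_i/H$ is a compact discrete space, hence finite. By the analysis immediately preceding the proposition, this finiteness on each component is exactly what is needed for the \'etale geometric morphism $\pi_A : \Ecal/A \to \Ecal$ to be tidy. Since $\pi_A$ is also \'etale (hence bounded) and $\Ecal$ has a natural number object, Proposition \ref{prop:locally-connected-tidy}(3) will give $(f/A) \bcp_{\pi_A} f$, and by (the proof of) Proposition \ref{prop:characterization-cartesian-closed} this Beck--Chevalley isomorphism is precisely the statement that $\theta_{X,A}$ is an isomorphism for every $X$ in $\Fcal$. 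Together with the colimit reduction, this verifies condition (2).

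The main obstacle I anticipate is exactly the reduction to orbit objects: without compactness of the $G_i$, the orbits $G_i/H$ need not be finite, the \'etale map $\pi_A$ would fail to be tidy, and the tidy Beck--Chevalley input from Section 2 would no longer apply. Once the reduction is in place, the rest is a direct application of the machinery already developed.
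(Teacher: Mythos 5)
Your proposal is correct and follows essentially the same route as the paper: reduce via Lemma \ref{lmm:EILC-for-stable-families} to cartesian closedness of $f^*$, decompose $A$ into connected pieces (single orbits in a single component), use compactness to see each orbit is finite so that $\Ecal/A \to \Ecal$ is tidy, and conclude via Proposition \ref{prop:locally-connected-tidy}(3) and the Beck--Chevalley reading of the Frobenius map. The only cosmetic difference is that the paper phrases the reduction as writing $A$ as a coproduct of connected objects (using local connectedness of $\Ecal$) rather than directly as an orbit decomposition, but these amount to the same thing here.
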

\begin{proof}
We write $\Ecal \simeq \bigsqcup_{i \in I} \mathbf{Cont}(G_i)$. Let $f : \Fcal \to \Ecal$ be an essential geometric morphism. We have to show that $f$ is locally connected. Applying Lemma \ref{lmm:EILC-for-stable-families}, it is enough to show that $f^*$ is cartesian closed. In other words, we have to show that the natural map $\theta_{X,A} : f_!(X \times f^*A) \to f_!(X) \times A$ is an isomorphism, for $X$ in $\Fcal$ and $A$ in $\Ecal$. Because $\Ecal$ is locally connected, we can write $A$ as a coproduct of connected objects. Coproducts are pullback-stable and preserved by $f^*$ and $f_!$, so we can reduce to the case where $A$ is connected. Note that if $A$ corresponds to the family $(A_i)_{i \in I}$ with each $A_i$ an object in $\mathbf{Cont}(G_i)$, then $A$ being connected implies that there is an index $i_0 \in I$ such that $A_{i_0}$ is a single $G_i$-orbit, and $A_j = \varnothing$ for all $j \neq i_0$. Using compactness of $G_{i_0}$, it follows that the underlying set of $A_{i_0}$ is finite. By the discussion above, we then have that the \'etale geometric morphism $\Ecal/A \to \Ecal$ is tidy. In particular, the pullback square
\begin{equation*}
\begin{tikzcd}
\Fcal/f^*A \ar[r,"{\tilde{\pi}}"] \ar[d,"{f/A}"'] & \Fcal \ar[d,"{f}"] \\
\Ecal/A \ar[r,"{\pi}"] & \Ecal 
\end{tikzcd}
\end{equation*}
satisfies the Beck--Chevalley condition, see Proposition \ref{prop:locally-connected-tidy}(3). But this coincides precisely with the statement that the natural map $f_!(X \times f^*A) \to f_!(X) \times A$ is an isomorphism, which is what we wanted to prove.
\end{proof}

For presheaf toposes, we have a jointly surjective family of essential points, and these points are typically not locally connected. As a result, presheaf toposes will usually not be EILC. More precisely:

\begin{proposition} \label{prop:characterization-presheaf-topos-EILC}
Let $\Ccal$ be a small category. Then $\PSh(\Ccal)$ is EILC if and only if $\Ccal$ is a groupoid.
\end{proposition}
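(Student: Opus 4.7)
The plan is to prove the two implications separately, using Corollary~\ref{cor:jacobson-is-eilc} for the forward direction and the canonical essential points of $\PSh(\Ccal)$ for the converse.

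For the forward direction, assuming $\Ccal$ is a groupoid, I would exhibit $\PSh(\Ccal)$ as a Jacobson \'etendue over $\Sets$. Taking $E = \coprod_c \y(c)$, the coproduct of all representables, gives a well-supported object because $1_d \in E(d)$ for every object $d$. Using the standard equivalences
\[
\PSh(\Ccal)/E ~\simeq~ \prod_c \PSh(\Ccal)/\y(c) ~\simeq~ \prod_c \PSh(\Ccal/c),
\]
and observing that for $\Ccal$ a groupoid each slice $\Ccal/c$ is equivalent to the terminal category (every object $x \to c$ has a unique morphism to the terminal object $(c, 1_c)$, since all arrows are invertible), one finds $\PSh(\Ccal)/E$ equivalent to $\Sh(D)$ for $D$ the discrete space on the object set of $\Ccal$. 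Since $D$ is Hausdorff it is Jacobson, and Corollary~\ref{cor:jacobson-is-eilc} applies.

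For the converse, assume $\PSh(\Ccal)$ is EILC. For each object $c$ the evaluation functor $\mathrm{ev}_c$ is the inverse image of an essential point $p_c : \Sets \to \PSh(\Ccal)$ with $p_{c,!}(S) = S \times \y(c)$; by hypothesis $p_c$ is locally connected. Computing the Frobenius map $p_{c,!}(p_c^* F \times S) \to p_{c,!}(S) \times F$ at the object $d$ with $S = 1$, one obtains the bijection requirement
\[
F(c) \times \Hom(d, c) \longrightarrow \Hom(d, c) \times F(d), \qquad (x, f) \mapsto (f, F(f)(x)).
\]
Demanding this to hold for every presheaf $F$ forces $F(f) : F(c) \to F(d)$ to be a bijection whenever $f : d \to c$; specializing $F$ to the representable $\y(e)$ shows that precomposition with $f$ is a bijection $\Hom(c, e) \to \Hom(d, e)$ for every $e$. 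Choosing $e = d$ produces a $g : c \to d$ with $gf = 1_d$, and then $e = c$ together with injectivity gives $fg = 1_c$, so $f$ is invertible. Since $c$ was arbitrary, every morphism of $\Ccal$ is invertible.

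The only real obstacle is careful bookkeeping of variance in the Frobenius computation; once that is settled, both directions assemble from Corollary~\ref{cor:jacobson-is-eilc} and a two-line Yoneda argument.
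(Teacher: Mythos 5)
Your proof is correct, but both halves take a genuinely different route from the paper. For the forward direction the paper simply observes that $\PSh(\Ccal)$ is a Boolean \'etendue when $\Ccal$ is a groupoid and invokes Proposition~\ref{prop:boolean-etendues-are-EILC}; you instead exhibit $\PSh(\Ccal)$ as a Jacobson \'etendue via $E = \coprod_c \y(c)$ and apply Corollary~\ref{cor:jacobson-is-eilc}. Your slice computation is right (each $\Ccal/c$ is an indiscrete category, hence $\PSh(\Ccal)/E \simeq \Sets^{\mathbf{Ob}(\Ccal)}$), and this is exactly the generalization of the paper's own remark that $\PSh(G)$ is EILC for a group $G$; the paper's route is shorter given Proposition~\ref{prop:boolean-etendues-are-EILC}, yours avoids the Boolean \'etendue result entirely. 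For the converse, the paper factors the locally connected point $p_C$ through its (connected locally connected, \'etale) factorization, concludes $p_C$ is \'etale since its domain is $\Sets$, and deduces $\PSh(\Ccal/C)\simeq\Sets$; you instead test the Frobenius map directly on representables and extract a two-sided inverse by Yoneda. Your computation is sound: local connectedness implies in particular the case $B=1$ of the map \eqref{eq:frobenius} (i.e.\ cartesian closedness of $p_c^*$), the map $F(c)\times\Hom(d,c)\to\Hom(d,c)\times F(d)$ preserves the $\Hom(d,c)$-coordinate and so is bijective iff each $F(f)$ is, and the specializations $e=d$ and $e=c$ do yield invertibility of $f$. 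Your converse is the more elementary and self-contained of the two, needing no factorization theorem; the paper's is slicker but leans on heavier machinery.
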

\begin{proof}
If $\Ccal$ is a groupoid, then $\PSh(\Ccal)$ is a Boolean \'etendue, so we can use Proposition \ref{prop:boolean-etendues-are-EILC} to conclude that $\PSh(\Ccal)$ is EILC.

Conversely, suppose that $\PSh(\Ccal)$ is EILC. Each object $C$ in $\Ccal$ determines an essential point $p : \Sets \to \PSh(\Ccal)$ with $p_!(1) \simeq \mathbf{y}C$, $\mathbf{y}$ the Yoneda embedding. Because of the EILC property, $p$ has to be locally connected. We can then factorize $p$ as a connected, locally connected geometric morphism, followed by an \'etale geometric morphism. However, because the domain topos is $\Sets$, the connected part is trivial, so $p$ is \'etale. It follows from $p_!(1)\simeq \mathbf{y}C$ that we then have $\PSh(\Ccal/C)\simeq \Sets$. This is only possible if $\Ccal/C$ has only one object up to isomorphism, or in other words any morphism $D \to C$ is necessarily an isomorphism. Because $C$ was arbitrary, we conclude that $\Ccal$ is a groupoid.
\end{proof}

More generally, we could consider the presheaf toposes $\PSh_\Scal(\Ccal)$, for $\Ccal$ an internal category in an arbitrary EILC topos $\Scal$. However, it is not known at the moment to the present author whether the analogue of Proposition \ref{prop:characterization-presheaf-topos-EILC} would still hold. 

\section{A weaker property: CILC toposes}

We will say that an elementary topos $\Ecal$ is \textbf{CILC} if any geometric morphism $f : \Fcal \to \Ecal$ such that $f^*$ is cartesian closed (i.e.\ preserves exponential objects) is automatically locally connected. 

Note that by a result of Barr and Par\'e \cite[Theorem 2]{barr-pare}, if $f^*$ is cartesian closed, then $f$ is essential. I thank Thomas Streicher for pointing out this result to me. So all EILC toposes are in particular CILC. The converse does not hold; we will construct some counterexamples below.

\begin{proposition} \label{prop:sufficient-condition-CILC}
Let $\Ecal$ be an elementary topos with a natural number object, and suppose that there is a jointly surjective family $\{p_i : \Ecal_i \to \Ecal \}_{i \in I}$, such that each $p_i$ can be factored as a closed inclusion followed by an \'etale geometric morphism, and such that each $\Ecal_i$ is EILC. Then $\Ecal$ is CILC.
\end{proposition}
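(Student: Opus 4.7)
Let $f : \Fcal \to \Ecal$ be a geometric morphism with $f^*$ cartesian closed. The result of Barr--Par\'e mentioned just above guarantees that $f$ is essential, so by Proposition \ref{prop:descent} it suffices to show that $f$ is locally connected at each $p_i$. The idea is to split each $p_i$ according to the given factorization into an \'etale part $\pi_i : \Ecal/E_i \to \Ecal$ and a closed inclusion part $\iota_i : \Ecal_i \to \Ecal/E_i$, handle the Beck--Chevalley condition for each factor separately, and then paste using Proposition \ref{prop:transitivity}.

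For the \'etale factor $\pi_i$, cartesian closedness of $f^*$ gives, by Proposition \ref{prop:characterization-cartesian-closed}, that $(f/E_i) \bcp_{\pi_i} f$, where $f/E_i : \Fcal/f^*E_i \to \Ecal/E_i$ is the pullback. Since pullbacks of essential morphisms along \'etale morphisms are essential (\cite[Lemma 5.2]{lawvere-menni}), $f/E_i$ is again essential. For the closed inclusion factor $\iota_i$, I form the further pullback
\begin{equation*}
\begin{tikzcd}
\Fcal_i \ar[r] \ar[d,"{f_i}"'] & \Fcal/f^*E_i \ar[d,"{f/E_i}"] \\
\Ecal_i \ar[r,"{\iota_i}"] & \Ecal/E_i
\end{tikzcd}
\end{equation*}
Since $\iota_i$ is a closed inclusion it is both bounded and tidy, and $\Ecal/E_i$ inherits a natural number object from $\Ecal$, so Proposition \ref{prop:locally-connected-tidy}(3) yields $f_i \bcp_{\iota_i} (f/E_i)$. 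Because $\iota_i$ is an inclusion, Proposition \ref{prop:stability} then tells us that $f_i$ is essential. Now using that $\Ecal_i$ is EILC, $f_i$ is locally connected.

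To finish, I paste the two Beck--Chevalley squares by Proposition \ref{prop:transitivity}, obtaining $f_i \bc_{p_i} f$ with $f_i$ locally connected; that is, $f$ is locally connected at $p_i$. Doing this for each $i$ and invoking Proposition \ref{prop:descent} on the jointly surjective family $\{p_i\}_{i \in I}$ gives that $f$ is locally connected, as required. The only non-routine point in the argument is the verification that the two Beck--Chevalley steps can be combined in the right order; the rest is bookkeeping with the results collected in Section 2, and the natural number object hypothesis is used in a single essential place, namely to invoke Proposition \ref{prop:locally-connected-tidy}(3) for the closed inclusion factor.
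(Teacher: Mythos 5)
Your proof is correct and follows essentially the same route as the paper's: factor each $p_i$ into its closed-inclusion and \'etale parts, obtain the Beck--Chevalley condition for the \'etale square from cartesian closedness (Proposition \ref{prop:characterization-cartesian-closed}) and for the closed-inclusion square from tidiness (Proposition \ref{prop:locally-connected-tidy}(3)), deduce essentialness of $f_i$ via Proposition \ref{prop:stability}, invoke the EILC hypothesis, and paste and descend. Your explicit remarks that $f$ is essential by Barr--Par\'e (needed to apply Proposition \ref{prop:descent}) and that $\Ecal/E_i$ inherits the natural number object are welcome clarifications of points the paper leaves implicit.
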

\begin{proof}
Let $f : \Fcal \to \Ecal$ be a geometric morphism with $f^*$ cartesian closed. For each $i \in I$, the geometric morphism $p_i$ factors as
\begin{equation*}
\Ecal_i \stackrel{j}{\longrightarrow} \Ecal/E \stackrel{\pi_E}{\longrightarrow} \Ecal
\end{equation*}
with $j$ a closed inclusion and $\pi_E : \Ecal/E \to \Ecal$ the \'etale geometric morphism corresponding to an object $E$ in $\Ecal$.

Now consider the corresponding pullback squares
\begin{equation*}
\begin{tikzcd}
\Fcal_i \ar[r,"{\tilde{j}}"] \ar[d,"{f_i}"'] & \Fcal/f^*(E) \ar[r,"{\tilde{\pi}_E}"] \ar[d,"{f/E}"] & \Fcal \ar[d,"{f}"] \\
\Ecal_i \ar[r,"{j}"'] & \Ecal/E \ar[r,"{\pi_E}"'] & \Ecal
\end{tikzcd}\quad.
\end{equation*}
Note that because $j$ and $\pi_E$ are localic, they are bounded. Further, since $j$ is a closed inclusion, it is in particular tidy, see \cite[Chapter III, Corollary 5.8]{moerdijk-vermeulen}. So using Proposition \ref{prop:locally-connected-tidy}(3), we find $f_i \bcp_j (f/E)$. Because $(f/E)$ is essential, it then follows from Proposition \ref{prop:stability} that $f_i$ is essential as well. But then $f_i$ is locally connected, because $\Ecal_i$ is EILC. 

Moreover, it follows from cartesian closedness of $f^*$ that $(f/E)\bcp_{\pi_E} f$, see Proposition \ref{prop:characterization-cartesian-closed}. Using Proposition \ref{prop:transitivity} and $p_i \simeq \pi_E \circ j$ we conclude that $f_i \bcp_{p_i} f$. So for each $i \in I$, we find that $f$ is locally connected at $p_i$. Because the family $\{p_i : \Ecal_i \to \Ecal \}$ is jointly surjective, we then conclude that $f$ is locally connected, see Proposition \ref{prop:descent}.
\end{proof}

The assumption that $\Ecal$ has a natural number object is relevant when we apply Proposition \ref{prop:locally-connected-tidy}(3). Alternatively, we could use Proposition \ref{prop:locally-connected-tidy}(4), but then we can only conclude that $\Ecal$ has the property that any \textit{bounded} geometric morphism $f : \Fcal \to \Ecal$, with $f^*$ cartesian closed, is locally connected.

\begin{definition}
A geometric morphism $f : \Ecal \to \Scal$ will be called \textbf{weakly Jacobson} if there is a jointly surjective family of points $\{p_i : \Scal \to \Ecal \}_{i\in I}$, such that each $p_i$ can be factored as a closed inclusion followed by an \'etale geometric morphism.
\end{definition}

\begin{theorem}
Let $f : \Ecal \to \Scal$ be a geometric morphism. Suppose that $\Ecal$ has a natural numbers object. If $f$ is weakly Jacobson and $\Scal$ is EILC, then $\Ecal$ is CILC.
\end{theorem}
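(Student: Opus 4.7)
The plan is to observe that this theorem is essentially an immediate repackaging of Proposition \ref{prop:sufficient-condition-CILC}. Unpacking the weakly Jacobson hypothesis, we are given a jointly surjective family of points $\{p_i : \Scal \to \Ecal\}_{i \in I}$ such that each $p_i$ factors as a closed inclusion followed by an \'etale geometric morphism. This is precisely the kind of family that Proposition \ref{prop:sufficient-condition-CILC} demands, taking the family $\{\Ecal_i\}_{i \in I}$ appearing there to be the constant family with $\Ecal_i = \Scal$ for every $i \in I$. Since $\Scal$ is EILC by hypothesis, every $\Ecal_i$ is EILC; combined with the assumption that $\Ecal$ has a natural numbers object, all hypotheses of Proposition \ref{prop:sufficient-condition-CILC} are satisfied, and the conclusion that $\Ecal$ is CILC follows immediately.

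If one wished to unpack the argument of Proposition \ref{prop:sufficient-condition-CILC} in this specific setting for clarity, the logic would run as follows. Let $g : \Fcal \to \Ecal$ be a geometric morphism with $g^*$ cartesian closed. For each $i \in I$, write $p_i = \pi_{E_i} \circ j_i$ with $j_i$ a closed inclusion and $\pi_{E_i}$ an \'etale map. Pulling $g$ back successively along $\pi_{E_i}$ and then $j_i$, cartesian closedness of $g^*$ yields the Beck--Chevalley square along $\pi_{E_i}$ by Proposition \ref{prop:characterization-cartesian-closed}, while tidiness of closed inclusions together with the natural numbers object hypothesis yields the Beck--Chevalley square along $j_i$ by Proposition \ref{prop:locally-connected-tidy}(3). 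Pasting them using Proposition \ref{prop:transitivity}, $g$ has a Beck--Chevalley square over each $p_i$; Proposition \ref{prop:stability} then shows that the pulled-back geometric morphisms are essential, and EILC-ness of $\Scal$ upgrades them to locally connected. Joint surjectivity of $\{p_i\}$ plus Proposition \ref{prop:descent} then yields local connectedness of $g$.

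There is thus no substantive new obstacle: all of the nontrivial work has already been carried out in the proof of Proposition \ref{prop:sufficient-condition-CILC}, and the only step here is verifying that the hypotheses of that proposition line up with those of the theorem, which they do essentially by definition. The theorem should therefore be formulated as a one-line corollary of Proposition \ref{prop:sufficient-condition-CILC}.
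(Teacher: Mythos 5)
Your proof is correct and takes exactly the same route as the paper: the theorem is indeed just an instance of Proposition \ref{prop:sufficient-condition-CILC}, applied with the constant family $\Ecal_i = \Scal$ supplied by the weakly Jacobson hypothesis. Your additional unpacking of that proposition's argument is also accurate, though the paper itself states the proof as the one-line corollary you suggest.
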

\begin{proof}
If $f$ is weakly Jacobson, then by definition there is a jointly surjective family of $\{ p_i : \Scal \to \Ecal \}_{i \in I}$ such that each $p_i$ can be factored as a closed inclusion followed by an \'etale geometric morphism. If moreover $\Scal$ is EILC, then Proposition \ref{prop:sufficient-condition-CILC} applies, and we conclude that $\Ecal$ is CILC. 
\end{proof}

We will restrict to Grothendieck toposes in the remainder of this section. We first characterize the topological spaces $X$ such that $\Sh(X)$ is weakly Jacobson (over the topos of sets). 

\begin{proposition} \label{prop:characterization-topological-space-weakly-jacobson}
Let $X$ be a topological space, and let $X_{\mathrm{lc}} \subseteq X$ be the subset of locally closed points. Then $\Sh(X)$ is weakly Jacobson if and only if $U \cap X_{\mathrm{lc}} = V \cap X_{\mathrm{lc}}$ implies $U = V$, for all open subsets $U,V \subseteq X$.
\end{proposition}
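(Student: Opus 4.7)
The plan is to identify the points $p : \Sets \to \Sh(X)$ that factor as a closed inclusion followed by an étale geometric morphism with the $p_x$ for $x \in X_{\mathrm{lc}}$, and then translate joint surjectivity of a family of such points into the separation condition on opens.

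For the identification, every étale geometric morphism into $\Sh(X)$ has the form $\pi_E : \Sh(X)/E \to \Sh(X)$ for some $E \in \Sh(X)$. Using the equivalence between sheaves on $X$ and local homeomorphisms over $X$, I will write $\Sh(X)/E \simeq \Sh(Y)$ for a local homeomorphism $q : Y \to X$, with $\pi_E$ induced by $q$. A closed inclusion $\Sets \hookrightarrow \Sh(Y)$ corresponds to a non-empty closed subset $Z \subseteq Y$ whose subspace topology yields $\Sh(Z) \simeq \Sets$, i.e., has only the trivial opens. Because $q$ is a local homeomorphism, any such $Z$ is contained in a trivializing open neighborhood, so $q$ restricts to a homeomorphism of $Z$ onto $q(Z) \subseteq X$; the image $q(Z)$ is then a closed indiscrete subspace of an open of $X$, hence pinpoints a locally closed point $x \in X_{\mathrm{lc}}$, and the composite $p$ coincides with $p_x$. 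Conversely, for $x \in X_{\mathrm{lc}}$, picking an open $U \ni x$ in which $\{x\}$ is closed yields a factorization $\Sets \to \Sh(U) \simeq \Sh(X)/U \to \Sh(X)$ of $p_x$ as closed inclusion followed by étale morphism.

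Given this, a family $\{p_x\}_{x \in S}$ with $S \subseteq X_{\mathrm{lc}}$ is jointly surjective iff the stalks at the $x \in S$ are jointly conservative, which for sheaves on a space amounts to the condition that $U \cap S = V \cap S$ implies $U = V$ for all open $U, V \subseteq X$. Hence $\Sh(X)$ is weakly Jacobson iff some $S \subseteq X_{\mathrm{lc}}$ has this separation property; and since enlarging $S$ inside $X_{\mathrm{lc}}$ only strengthens the separation property, this is equivalent to the full $S = X_{\mathrm{lc}}$ already working, which is the stated condition.

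The main obstacle I anticipate is the careful characterization of closed inclusions $\Sets \hookrightarrow \Sh(Y)$ and the identification of their images in $X$ with locally closed points: one must combine the classification of closed subtoposes of $\Sh(Y)$ by closed subsets of $Y$ with the fact that the subspace topology on such a $Z$ is forced to be indiscrete, and then use that $q$ is a local homeomorphism to conclude $q(Z)$ is locally closed in $X$. After that step, the remaining argument is a routine unraveling of the definition of joint surjectivity in terms of opens.
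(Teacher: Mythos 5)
Your argument is correct and follows essentially the same route as the paper: both identify the points that factor as a closed inclusion followed by an \'etale geometric morphism with the points $p_x$ for $x$ locally closed, via the correspondence between \'etale morphisms over $\Sh(X)$ and local homeomorphisms over $X$, and then translate joint surjectivity of such a family into the stated condition on open sets. The one step requiring the same care that the paper spends on its reduction to sober spaces is your claim that the closed indiscrete subspace $q(Z)$ ``pinpoints'' a locally closed point of $X$: an indiscrete subspace is a singleton only when $X$ is $T_0$, so for general $X$ this step (like the paper's own sobrification argument) needs an additional hypothesis or adjustment.
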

\begin{proof}
The locally closed points of $X$ are precisely the points that are open in their closure. So if $x \in X$ is a locally closed point, then there is an open set $U \subseteq X$ such that $U \cap \overline{\{x\}} = \{x\}$. In this situation, $x$ is the only point that can distinguish between the open sets $W$ and $W \cup U$, for $W = X-\overline{\{x\}}$. This implies that a jointly surjective family of points $\{ p_i : \Sets \to \Sh(X) \}$ will necessarily contain all points $p_x : \Sets \to \Sh(X)$ corresponding to locally closed points $x \in X_{\mathrm{lc}} \subseteq X$.

In particular, let $\tilde{X}$ be the sobrification of $X$. Then the elements of $X$ determine a jointly surjective family of points for $\Sh(\tilde{X})$, and by the above this means that all locally closed points of $\tilde{X}$ are also contained in $X$. So we can assume without loss of generality that $X$ is sober, i.e.\ that the correspondence between elements of $X$ and topos-theoretic points $\Sets \to \Sh(X)$ (up to isomorphism) is bijective.

Now suppose that $\Sh(X)$ is weakly Jacobson, or in other words that there exists a jointly surjective family of points $\{ p_i : \Sets \to \Sh(X) \}_{i \in I}$, such that each $p_i$ can be factored as a closed inclusion followed by an \'etale geometric morphism. Let $x_i \in X$ be the element corresponding to $p_i$. The embedding $\{x_i\} \subseteq X$ can then be factored as a closed inclusion $\{x_i\} \subseteq E$ followed by a local homeomorphism $\pi: E \to X$. Take an open set $U$ containing $x_i$ such that the restriction of $\pi$ defines an homeomorphism from $U$ to the open set $\pi(U) \subseteq X$. Then $\{x_i\} \subseteq X$ factors as a closed inclusion $\{x_i\} \subseteq \pi(U)$ followed by an open inclusion $\pi(U) \subseteq X$. So each $x_i$ is a locally closed point. As a result, the locally closed points of $X$ form a jointly surjective family, i.e.\ if two open subsets $U,V$ contain the same locally closed points, then $U=V$.

Conversely, suppose that the locally closed points form a jointly surjective family. For each locally closed point $x \in X$, we can write $\{x\} = U \cap V$ with $U$ open and $V$ closed. But then the inclusion $\{x\} \subseteq X$ factorizes as a closed inclusion $\{x\} \subseteq U$ followed by the open inclusion $U \subseteq X$, which is in particular a local homeomorphism. But then $\Sh(X)$ is weakly Jacobson.
\end{proof}

\begin{example}
The Sierpinski space is given by $\mathbb{S} = \{m,g\}$ with as open sets $\varnothing$, $\{g\}$ and $\{g,m\}$. Both points are locally closed ($g$ is open and $m$ is closed). It then follows by Proposition \ref{prop:characterization-topological-space-weakly-jacobson} that $\Sh(\mathbb{S})$ is weakly Jacobson. As a result, $\Sh(\mathbb{S})$ is CILC.

Note that $\Sh(\SS) \simeq \PSh(\Ccal)$, for $\Ccal$ the category with two objects $A$ and $B$ and a single non-identity morphism $A \to B$. So from Proposition \ref{prop:characterization-presheaf-topos-EILC} it follows that $\Sh(\SS)$ is not EILC.
\end{example}

\begin{example}
In some topological spaces, none of the points are locally closed. Take for example the set $X \subset \mathcal{P}(\NN)$ of infinite subsets of natural numbers, with as topology the smallest topology such that the sets
\begin{equation*}
U_n = \{ V : V \ni n  \} \subseteq X
\end{equation*}
are open. Then none of the points of $X$ are locally closed, so $\Sh(X)$ is not weakly Jacobson. However, it is not known to the author whether $\Sh(X)$ is CILC or even EILC.
\end{example}

We also want to give a criterion for when a presheaf topos is weakly Jacobson (over the topos of sets). We first need the following lemma:

\begin{lemma} \label{lmm:local-with-closed-center}
Let $\Ccal$ be a small category with a terminal object. Then $\PSh(\Ccal)$ is local. Moreover, its center is a closed inclusion if and only if the terminal object in $\Ccal$ is strict.
\end{lemma}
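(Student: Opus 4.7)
\textit{Locality.} The plan is to exploit that $T$ being terminal in $\Ccal$ makes the Yoneda image $\y(T)$ the terminal presheaf, so that the global sections functor $\gamma_*$ coincides (up to natural iso) with $\mathrm{ev}_T : F \mapsto F(T)$. Evaluation at $T$ preserves all small colimits and limits (computed pointwise), hence admits a right adjoint $c_*$; a direct Kan extension computation yields $c_*(S)(D) = S^{\Hom(T,D)}$. The counit $c^*c_*(S) = S^{\Hom(T,T)} = S$ is the identity (using $\Hom(T,T) = \{\mathrm{id}\}$), so $c_*$ is fully faithful, making $c : \Sets \to \PSh(\Ccal)$ a geometric inclusion and $\PSh(\Ccal)$ local with centre $c$.

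\textit{Pinning down the subterminal complement.} For the second claim, I would first observe that any closed inclusion is the closed complement of the open subtopos classified by the subterminal $c_*(0)$, so the only candidate is $U := c_*(0)$. Computing $U(D) = 0^{\Hom(T,D)}$ (equal to $*$ when $\Hom(T,D) = \emptyset$ and $\emptyset$ otherwise), one sees that $U$ corresponds to the sieve $V := \{D : \Hom(T, D) = \emptyset\}$ (the largest sieve on $1$ not containing $T$). Let $\Fcal_U = \{F : F(D) = * \text{ for all } D \in V\}$ denote the associated closed subtopos. The centre $c$ is then a closed inclusion precisely when the essential image of $c_*$ coincides with $\Fcal_U$.

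\textit{Main reduction.} The decisive step will be to identify $\Fcal_U$ with a presheaf topos on $\Ccal_0 := \Ccal \setminus V = \{D : \Hom(T, D) \neq \emptyset\}$. The key observation here is that no morphism $\alpha : D_1 \to D_2$ in $\Ccal$ can have $D_1 \in \Ccal_0$ and $D_2 \in V$, for composing $\alpha$ with any $g \in \Hom(T, D_1)$ would yield a morphism $T \to D_2$, contradicting $D_2 \in V$. Using this, restriction gives an equivalence $\Fcal_U \simeq \PSh(\Ccal_0)$, with inverse extending a presheaf on $\Ccal_0$ by $*$ on $V$-objects (the required morphism data being forced by uniqueness). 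Under this equivalence, $c_*$ becomes the fully faithful functor $S \mapsto (D \mapsto S^{\Hom(T, D)})$, so $c$ is a closed inclusion iff this functor is essentially surjective, iff $\mathrm{ev}_T : \PSh(\Ccal_0) \to \Sets$ is an equivalence of categories.

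\textit{Conclusion.} The final step is to verify $\PSh(\Ccal_0) \simeq \Sets$ iff $T$ is strict. If $T$ is strict, every morphism $T \to D$ is an isomorphism, so every $D \in \Ccal_0$ satisfies $D \cong T$; combined with $\Hom(T,T) = \{\mathrm{id}\}$, this gives $\Ccal_0 \simeq *$ and hence $\PSh(\Ccal_0) \simeq \Sets$. Conversely, if $T$ is not strict, some $D \not\cong T$ lies in $\Ccal_0$, and the essential points $\mathrm{ev}_T, \mathrm{ev}_D : \PSh(\Ccal_0) \to \Sets$ are non-isomorphic (since the representables $\y(T)$ and $\y(D)$ are not isomorphic), which prevents $\PSh(\Ccal_0) \simeq \Sets$. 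The main technical hurdle in the whole argument is the reduction $\Fcal_U \simeq \PSh(\Ccal_0)$, which hinges on the absence of morphisms from $\Ccal_0$ into $V$.
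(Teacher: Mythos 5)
Your proof is correct, but it takes a genuinely different and more self-contained route than the paper's. The paper cites Johnstone's Examples C3.6.3(b) for locality and Lemma C3.2.4 for the criterion that the centre $p$ is closed if and only if every morphism $1 \to C$ in $\Ccal$ admits a right inverse, and then finishes in one line by observing that such a right inverse is automatically two-sided (since $rb$ is an endomorphism of the terminal object), so the criterion reduces to strictness. You instead reprove the relevant special case of that lemma from scratch: you compute the centre explicitly as $\mathrm{ev}_T \dashv \bigl(S \mapsto S^{\Hom(T,-)}\bigr)$, identify the subterminal $c_*(0)$ with the sieve $V$ of objects admitting no map from $T$, recognize the associated closed subtopos as $\PSh(\Ccal_0)$ for the complementary cosieve $\Ccal_0$ (your observation that no morphism can go from $\Ccal_0$ into $V$ is exactly what makes the extension-by-a-point functor well defined), and reduce closedness of the centre to $\PSh(\Ccal_0) \simeq \Sets$, which you settle by comparing the essential points $\mathrm{ev}_T$ and $\mathrm{ev}_D$ and using that $\Sets$ has a unique point up to isomorphism. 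Your approach buys an explicit description of the closure of the centre (namely $\PSh(\Ccal_0)$) and independence from the Elephant; the paper's buys brevity. Note that both arguments are non-constructive --- the paper flags this for C3.2.4, and your case split on whether $\Hom(T,D)$ is empty plays the same role --- so neither generalizes over an arbitrary base topos. One very minor omission: to match the definition of local geometric morphism used in the paper you should also record that $\gamma^*$ is fully faithful, which is immediate since a category with a terminal object is connected.
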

\begin{proof}
If $\Ccal$ has a terminal object, then $\PSh(\Ccal)$ is local, see \cite[Examples C3.6.3(b)]{johnstone-elephant}. The center of a local geometric morphism is an inclusion, and in this case it agrees with the essential point $p : \Sets \to \PSh(\Ccal)$ corresponding to the terminal object in $\Ccal$.

From \cite[Lemma C3.2.4]{johnstone-elephant} it then follows that $p$ is closed if and only if every morphism $b : 1 \to C$ in $\Ccal$ admits a right inverse $r : C \to 1$. Whenever such a right inverse $r$ exists, it must also be a left inverse, because $rb$ is an endomorphism of the terminal object. So we find that $p$ is closed if and only if every morphism $1 \to C$ is an isomorphism, or in other words if and only if the terminal object is strict. 
\end{proof}

Note that the argument in \cite[Lemma C3.2.4]{johnstone-elephant} is not constructive. So our argument here does not generalize to presheaf toposes over an arbitrary base topos.

\begin{proposition} \label{prop:characterization-presheaf-topos-weakly-jacobson}
Let $\Ccal$ be a small category. If every morphism in $\Ccal$ admitting a right inverse is an isomorphism, then $\PSh(\Ccal)$ is weakly Jacobson. Conversely, if $\PSh(\Ccal)$ is weakly Jacobson, then there is a small category $\Ccal'$, with $\PSh(\Ccal)\simeq \PSh(\Ccal')$, such that every morphism in $\Ccal'$ admitting a right inverse is an isomorphism.
\end{proposition}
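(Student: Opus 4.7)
For the forward direction, I plan to apply Lemma~\ref{lmm:local-with-closed-center} to each slice category $\Ccal/C$. For each object $C$ of $\Ccal$, the slice $\Ccal/C$ has a terminal object $(C, 1_C)$, and a morphism $(C, 1_C) \to (D, g)$ in $\Ccal/C$ is precisely a map $h \colon C \to D$ in $\Ccal$ with $gh = 1_C$, that is, a right inverse of $g$. The hypothesis that every right-invertible morphism of $\Ccal$ is an isomorphism therefore translates exactly to the strictness of the terminal object in every slice $\Ccal/C$. By Lemma~\ref{lmm:local-with-closed-center}, the center $j_C \colon \Sets \to \PSh(\Ccal/C)$ of the corresponding local geometric morphism is a closed inclusion. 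Composing with the étale morphism $\pi_{\y C} \colon \PSh(\Ccal)/\y C \simeq \PSh(\Ccal/C) \to \PSh(\Ccal)$ presents the representable point $p_C \colon \Sets \to \PSh(\Ccal)$ as a closed inclusion followed by an étale morphism. Since the family $\{p_C\}_{C \in \Ccal}$ is jointly surjective, this shows that $\PSh(\Ccal)$ is weakly Jacobson.

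For the converse, suppose that $\PSh(\Ccal)$ is weakly Jacobson and that $\{p_i \colon \Sets \to \PSh(\Ccal)\}_{i \in I}$ is a jointly surjective family with factorizations $p_i = \pi_{X_i} \circ j_i$ via étale morphisms $\pi_{X_i}$ and closed inclusions $j_i$. My strategy is to build $\Ccal'$ directly from these data. First I would argue that after replacing $\Ccal$ by a Morita-equivalent small category (concretely, by enlarging $\Ccal$ inside its Cauchy completion so that the atoms $p_{i,!}(1)$ become representable), each $p_i$ can be taken to be the representable point $p_{C_i}$ associated to some object $C_i$ of the new category, with $X_i = \y C_i$. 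Then $\PSh(\Ccal)/X_i \simeq \PSh(\Ccal/C_i)$, and the fact that $j_i$ is a closed inclusion —  which identifies it with the center of the local geometric morphism corresponding to the terminal object $(C_i, 1_{C_i})$ of $\Ccal/C_i$ — together with Lemma~\ref{lmm:local-with-closed-center} forces each $\Ccal/C_i$ to have a strict terminal. As in the forward direction, this means that every split epimorphism into $C_i$ is an isomorphism in $\Ccal$.

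I would then take $\Ccal'$ to be the full subcategory of $\Ccal$ (after the replacement) spanned by $\{C_i\}_{i \in I}$. The joint surjectivity of the $p_{C_i}$ implies that the $\y C_i$ form a generating family of tiny objects in $\PSh(\Ccal)$, so by the standard comparison lemma for presheaf toposes, restriction along $\Ccal' \hookrightarrow \Ccal$ induces an equivalence $\PSh(\Ccal) \simeq \PSh(\Ccal')$. By construction, any morphism $g \colon C_j \to C_i$ in $\Ccal'$ admitting a right inverse is a split epimorphism into $C_i$, and hence an isomorphism by the preceding paragraph. Thus $\Ccal'$ has the desired property.

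The main obstacle will be the first step of the converse: justifying that the family of points can be assumed to consist of essential representable points (with $X_i$ representable) after a Morita-type replacement of $\Ccal$. This requires care both in handling points $p_i$ that are a priori not essential and in extracting the correct atoms from the closed-inclusion data; in essence one needs a partial converse to Lemma~\ref{lmm:local-with-closed-center}, phrased in terms of essential points and the Cauchy completion of $\Ccal$.
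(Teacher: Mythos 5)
Your forward direction is correct and is essentially the paper's argument: right inverses of $g \colon D \to C$ correspond to morphisms out of the terminal object of $\Ccal/C$, so the hypothesis gives strictness of that terminal object, and Lemma \ref{lmm:local-with-closed-center} then makes the center $\Sets \to \PSh(\Ccal/C)$ a closed inclusion; composing with the \'etale morphism to $\PSh(\Ccal)$ finishes it.

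The converse, however, has a genuine gap, and it sits exactly where you flag ``the main obstacle''. Two things are missing. First, essentiality of the $p_i$: the paper obtains this by noting that the closed inclusion $j_i \colon \Sets \to \PSh(\Dcal_i)$ must itself be essential, because the essential points of a presheaf topos form a jointly surjective family; if $j_i$ were not essential, all of them would factor through the open complement of the closed subtopos defined by $j_i$, forcing that complement to be the whole topos, a contradiction. Since $j_i$ and the \'etale part are both essential, so is $p_i$, and only then is your Morita replacement (making each $p_i$ representable) available. Second, and more seriously: even after that replacement, the factorization of $p_C$ witnessing weak Jacobsonness passes through some \emph{arbitrary} \'etale morphism $\PSh(\Dcal) \to \PSh(\Ccal')$; nothing guarantees $\Dcal \simeq \Ccal'/C$, so you cannot simply ``identify'' the given closed inclusion $j$ with the center $j'$ of $\PSh(\Ccal'/C)$ as your second paragraph does. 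The paper bridges this by invoking the uniqueness of the (terminal-connected, \'etale) factorization: applying it to the closed inclusion $j$ yields an \'etale morphism $\pi'' \colon \PSh(\Ccal'/C) \to \PSh(\Dcal)$, and then $j'$ is exhibited as $\tilde{j} \circ s$, where $\tilde{j}$ is a pullback of $j$ (hence a closed inclusion) and $s$ is a section of a geometric morphism $\Sets/A \to \Sets$ (hence also a closed inclusion), so $j'$ is closed and Lemma \ref{lmm:local-with-closed-center} applies. Without this comparison of the two factorizations your argument does not close; you have correctly diagnosed that a ``partial converse to Lemma \ref{lmm:local-with-closed-center}'' is needed, but that is precisely the content still to be proved.
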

\begin{proof}
For every object $C$, we can consider the corresponding point 
\begin{equation*}
p_C : \Sets \to \PSh(\Ccal). 
\end{equation*}
This point factors as $j : \Sets \to \PSh(\Ccal/C)$ followed by $\pi: \PSh(\Ccal/C) \to \PSh(\Ccal)$. If every morphism $f : D \to C$ that admits a right inverse is an isomorphism, then the terminal object in $\Ccal/C$ is strict. But then using Lemma \ref{lmm:local-with-closed-center} we see that $j$ is a closed inclusion. In the definition of weakly Jacobson, we can now take the family $\{p_C : \Sets \to \PSh(\Ccal) \}_C$, with $C$ going over the objects of $\Ccal$, to conclude that $\PSh(\Ccal)$ is indeed weakly Jacobson.

Conversely, let $\{ p_i : \Sets \to \PSh(\Ccal) \}_{i \in I}$ be a jointly surjective family, such that each point $p_i$ can be factored as a closed inclusion $j : \Sets \to \PSh(\Dcal)$ followed by an \'etale geometric morphism $\pi: \PSh(\Dcal) \to \PSh(\Ccal)$. Because $j$ is a closed inclusion, it must be essential. Indeed, otherwise all essential points would be contained in the complement of the subtopos defined by $j$, and because the essential points form a jointly surjective family, this means that the complement of $j$ is the full topos $\PSh(\Dcal)$, a contradiction. As a result, we know that $p_i$ is essential, for all $i \in I$. The family $\{ p_i : \Sets \to \PSh(\Ccal) \}_{i \in I}$ is jointly surjective, so we can find a small category $\Ccal'$, with $\PSh(\Ccal')\simeq\PSh(\Ccal)$, such that 
\begin{equation*}
\{ p_i : \Sets \to \PSh(\Ccal') \}_{i \in I} = \{ p_C : \Sets \to \PSh(\Ccal') \}_{C \in \mathbf{Ob}(\Ccal')}, 
\end{equation*}
for $p_C : \Sets \to \Ccal'$ the essential geometric morphism associated to $C$ (points are considered up to isomorphism).

For each object $C$ in $\Ccal'$, we can now factorize $p_C$ as a closed inclusion $j : \Sets \to \PSh(\Dcal)$ followed by the \'etale geometric morphism $\pi: \PSh(\Dcal) \to \PSh(\Ccal')$, as above. We further have a different factorization of $p_C$ as an inclusion $j' : \Sets \to \PSh(\Ccal'/C)$ followed by an \'etale geometric morphism $\pi' : \PSh(\Ccal'/C) \to \PSh(\Ccal')$. In fact, the latter is precisely the (terminal-connected, \'etale) factorization as described by Caramello in \cite[Section 4.7]{caramello-denseness}. The geometric morphism $j'$ is the center of the local topos $\PSh(\Ccal'/C)$. We want to show that $j'$ is closed, because then we can apply Lemma \ref{lmm:local-with-closed-center}.

We apply the (terminal-connected, \'etale) factorization to the closed inclusion $j : \Sets \to \PSh(\Dcal)$. By uniqueness of (terminal-connected, \'etale) factorizations \cite[Proposition 4.62]{caramello-denseness}, this factorization must be given by $j' : \Sets \to \PSh(\Ccal'/C)$ followed by an \'etale geometric morphism $\pi'' : \PSh(\Ccal'/C) \to \PSh(\Dcal)$. Now consider the pullback diagram
\begin{equation*}
\begin{tikzcd}
\Sets/A \ar[r,"{\tilde{j}}"] \ar[d,"{\gamma}"'] & \PSh(\Ccal'/C) \ar[d,"{\pi''}"] \\
\Sets \ar[r,"{j}"] & \PSh(\Dcal)
\end{tikzcd}
\end{equation*}
and note that $j' : \Sets \to \PSh(\Ccal'/C)$ can be written as $j' = \tilde{j}\circ s$, for $s$ a section of $\gamma$ (see \cite[Expos\'e IV, Proposition 5.12]{sga4-1}). The geometric morphism $\tilde{j}$ is the pullback of the closed inclusion $j$, so it is itself a closed inclusion. Further, any section of $\gamma$ is also a closed inclusion (a discrete topological space has closed points). It follows that $j'$ is a closed inclusion. By Lemma \ref{lmm:local-with-closed-center}, this implies that the terminal object in $\Ccal'/C$ is strict. In the above, the object $C$ was arbitrary, so $\Ccal'/C$ has a strict terminal object for all objects $C$ in $\Ccal'$. In other words, if an arbitrary morphism in $\Ccal'$ has a right inverse, then it must be an isomorphism.
\end{proof}

\begin{example} \ 
\begin{enumerate}
\item The topos of directed graphs is the topos of presheaves on a category $\Ccal$ with two objects $V$ and $E$, and as morphisms the two identity morphisms and $s,t : V \to E$. The only morphisms in $\Ccal$ that admit a right inverse, are the identity morphisms. So the topos of directed graphs is weakly Jacobson, in particular CILC.
\item Let $M$ be a monoid such that every right-invertible element is (two-sided) invertible. Then $\PSh(M)$ is weakly Jacobson, in particular CILC.
\item Let $N$ be the monoid of natural numbers (with zero) under multiplication. Consider the category $\Ccal$ with as objects the left $N$-set $N$, with the action given by multiplication, and the terminal left $N$-set $1$. As morphisms, we take the morphisms of left $N$-sets. The unique morphism $N \to 1$ in $\Ccal$ then admits a right inverse. However, because $1$ is a retract of $N$ in $\Ccal$, we have $\PSh(\Ccal) \simeq \PSh(N)$. Moreover, in $N$ there is only one element that admits a right inverse, namely the identity. So $\PSh(\Ccal) \simeq \PSh(N)$ is weakly Jacobson.
\item Let $M$ be a monoid containing a right-invertible element that is not invertible. Suppose that $\PSh(M) \simeq \PSh(M')$ for a different monoid $M'$. Then $M'$ again contains a right-invertible element that is not invertible, otherwise the Morita equivalence $\PSh(M) \simeq \PSh(M')$ would imply that $M \cong M'$, see for example \cite[Corollary 7.2(3)]{mr-monoid-actions}. If, more generally, $\Ccal'$ is a category with $\PSh(M) \simeq \PSh(\Ccal')$, then there is an object in $\Ccal'$ that is a generator, in the sense that its endomorphism monoid $M'$ satisfies $\PSh(M)\simeq \PSh(M')$. We conclude that in $\Ccal'$ there is a right-invertible morphism that is not invertible. It then follows from Proposition \ref{prop:characterization-presheaf-topos-weakly-jacobson} that $\PSh(M)$ is not weakly Jacobson.
\end{enumerate}
\end{example}

Finally, we also show that every Boolean elementary topos is CILC. 

\begin{theorem} \label{thm:boolean-CILC}
Every Boolean elementary topos is CILC.
\end{theorem}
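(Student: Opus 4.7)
The plan is to reduce the Frobenius isomorphism
\begin{equation*}
\theta_{X,A\to B} : f_!(X \times_{f^*B} f^*A) \longrightarrow f_!(X) \times_B A
\end{equation*}
to two special cases of the morphism $A \to B$: monomorphisms (handled by the Boolean hypothesis) and projections out of a product (handled by cartesian closedness of $f^*$). By the theorem of Barr and Par\'e recalled above, cartesian closedness of $f^*$ already forces $f$ to be essential, and local connectedness of $f$ is precisely the statement that $\theta_{X,A\to B}$ is an isomorphism for every $A \to B$ in $\Ecal$ and every $X \to f^*B$ in $\Fcal$.

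First I would verify that the Frobenius property is transitive along compositions: given $A \xrightarrow{\alpha} C \xrightarrow{\beta} B$, if $\theta$ is an isomorphism for $\beta$ (at every $Y \to f^*B$) and for $\alpha$ (at every $Z \to f^*C$), then it is an isomorphism for $\beta\alpha$. This is a diagram chase that uses pullback pasting, applied to the intermediate object $W = X \times_{f^*B} f^*C$; naturality of the comparison ensures that the composite isomorphism agrees with the Frobenius comparison for $\beta\alpha$. Next, I would factor an arbitrary $h : A \to B$ as the graph $\langle \mathrm{id}_A, h\rangle : A \hookrightarrow A \times B$ followed by the projection $\pi_2 : A \times B \to B$. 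The graph is a split monomorphism, so this reduces the problem to verifying $\theta$ separately for monomorphisms and for projections out of a product.

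For a projection $\pi_2 : A \times B \to B$ and any $X \to f^*B$, the pullback $X \times_{f^*B} (f^*A \times f^*B)$ collapses to the plain product $X \times f^*A$, and the Frobenius comparison becomes the cartesian-closedness map $f_!(X \times f^*A) \to f_!(X) \times A$, which is an isomorphism by hypothesis (cf.\ Proposition \ref{prop:characterization-cartesian-closed}). For a monomorphism $A \hookrightarrow B$ in the Boolean topos $\Ecal$, I would invoke the existence of a complement $A' \hookrightarrow B$ with $B \simeq A + A'$. Since $f^*$ preserves coproducts, $f^*B \simeq f^*A + f^*A'$; by extensivity of the topos $\Fcal$, any $X \to f^*B$ splits uniquely as $X \simeq X_A + X_{A'}$ with $X_A = X \times_{f^*B} f^*A$; and $f_!$ preserves coproducts. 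Distributing $f_!(X) \times_B A$ across this coproduct, and using the disjointness $A \times_B A' = 0$ of complementary subobjects of $B$, kills the second summand and identifies the first with $f_!(X_A)$, yielding the desired isomorphism.

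The argument uses only elementary-topos structure: no Grothendieck hypothesis, no natural numbers object, no global points or sites are needed. There is no serious obstacle, since the Boolean hypothesis matches perfectly with the monomorphism case (where it supplies the required complement) and cartesian closedness covers the remaining projection case by definition. The only step requiring care is the transitivity verification, where one must check that the composite isomorphism is indeed the natural Frobenius comparison for $\beta\alpha$; this follows from the functoriality of pullback and the definition of $\theta$ in terms of its two component projections to $f_!(X)$ and to $A$.
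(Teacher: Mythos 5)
Your proposal is correct and follows essentially the same route as the paper: factor an arbitrary morphism as its graph (a monomorphism) followed by a projection, handle monomorphisms via Boolean complements and coproduct preservation, handle projections via cartesian closedness of $f^*$, and glue the two cases by transitivity of the Beck--Chevalley/Frobenius condition. The paper's proof differs only in presentation (it phrases the projection case as a slice of the square over the terminal object and the monomorphism case as restricting the identity comparison $\theta_{X,B,B}$ to the two summands), so no substantive comparison is needed.
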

\begin{proof}
Let $\Ecal$ be a Boolean elementary topos, and let $f : \Fcal \to \Ecal$ be a geometric morphism such that the inverse image functor $f^*$ is cartesian closed. We need to show that for every morphism $\phi: A \to B$, the associated pullback square
\begin{equation} \label{eq:square-for-phi}
\begin{tikzcd}
\Fcal/f^*A \ar[d,"{f/A}"'] \ar[r,"{\tilde{\pi}}"] & \Fcal/f^*B \ar[d,"{f/B}"] \\
\Ecal/A \ar[r,"{\pi}"] & \Ecal/B
\end{tikzcd}
\end{equation}
satisfies the Beck--Chevalley condition $(f/A) \bcp_\pi (f/B)$.

There are two situations in which we know this Beck--Chevalley condition is satisfied. First of all, if $\phi$ is a monomorphism, then $\pi$ is an inclusion, and because $\Ecal/B$ is Boolean, it must be a closed inclusion. So it is bounded and tidy, and then the Beck--Chevalley condition is automatically satisfied, see Proposition \ref{prop:locally-connected-tidy}(3). Note however that in order to apply Proposition \ref{prop:locally-connected-tidy}(3), we need that $\Ecal$ has a natural numbers object. To avoid this extra assumption, we give an alternative argument. Consider the natural map
\begin{equation*}
\theta_{X,B,A} ~:~ f_!(X \times_{f^*B} f^*A) \longrightarrow f_!(X) \times_B A.
\end{equation*}
If $\phi : A \to B$ is an inclusion, then because $\Ecal$ is Boolean, we can take a complement $A'$ of $A$. The natural map $\theta_{X,B,B}$ associated to the identity $B \to B$ is trivially an isomorphism, so its restrictions $\theta_{X,B,A}$ and $\theta_{X,B,A'}$ are isomorphisms as well. As a result, the diagram \eqref{eq:square-for-phi} satisfies $(f/A) \bcp_\pi (f/B)$ as soon as $\phi$ is injective.

A second situation when the Beck--Chevalley condition is satisfied is when $X = 1$, because in this case the Beck--Chevalley condition follows from $f^*$ being cartesian closed, see Proposition \ref{prop:characterization-cartesian-closed}. More generally, if $\phi$ is of the form $\pi_B : Y \times B \to B$ (projection on the second component), then the corresponding pullback square is a slice over $B$ of the pullback square for $\phi : Y \to 1$, so it is again a Beck--Chevalley square, see \cite[Lemma A4.1.16]{johnstone-elephant}.

In general, we can factor $\phi : Y \to X$ as the inclusion $j : Y \to Y \times X$, $j = (\mathrm{id}_Y,\phi)$ followed by the projection $\pi_X : Y \times X \to X$. But then the square in \eqref{eq:square-for-phi} satisfies the Beck--Chevalley condition by applying transitivity, see Proposition \ref{prop:transitivity}.
\end{proof}

\begin{remark}
Theorem \ref{thm:boolean-CILC} extends an earlier result by Mat\'ias Menni, who showed that a connected, essential geometric morphism $f : \Fcal \to \Ecal$, with $\Ecal$ a Boolean topos, is locally connected as soon as $f_!$ preserves finite products. Note that if $f$ is connected and $f_!$ preserves finite products, then $f^*$ is cartesian closed, see \cite[Proposition A4.3.1]{johnstone-elephant}.
\end{remark}

\section*{Acknowledgements}

I would like to thank Thomas Streicher, Mat\'ias Menni and Morgan Rogers for interesting discussions leading to this article, and for helpful comments on draft versions. The author is a postdoctoral fellow of the Research Foundation Flanders (file number 1276521N).

\providecommand{\bysame}{\leavevmode\hbox to3em{\hrulefill}\thinspace}
\providecommand{\MR}{\relax\ifhmode\unskip\space\fi MR }
\providecommand{\MRhref}[2]{%
  \href{http://www.ams.org/mathscinet-getitem?mr=#1}{#2}
}
\providecommand{\href}[2]{#2}

\end{document}